\documentclass[a4paper,12pt, british, reqno]{amsart}

\usepackage{babel}
\usepackage{mathabx}
\usepackage{amssymb}
\usepackage{amsfonts}
\usepackage{enumitem}
\usepackage{hyperref}
\usepackage[utf8]{inputenc}
\usepackage{newunicodechar}
\usepackage{mathtools}
\usepackage{varioref}
\usepackage[centering]{geometry}
\usepackage[arrow,curve,matrix]{xy}
\usepackage{csquotes}
\usepackage{dynkin-diagrams}

\usepackage{colortbl}
\usepackage{graphicx}
\usepackage{tikz}
\usepackage{tikz-cd}

\usepackage{mathrsfs}

\usepackage{multirow}
\usepackage{multicol}
\usepackage{longtable} 
\usepackage{caption}
\usepackage{hhline}
\newcolumntype{M}[1]{>{\centering\arraybackslash}m{#1}} %define dimension for long stable

\definecolor{linkred}{rgb}{0.7,0.2,0.2}
\definecolor{linkblue}{rgb}{0,0.2,0.6}

\numberwithin{figure}{section}

\setdescription{labelindent=\parindent, leftmargin=2\parindent}
\setitemize[1]{labelindent=\parindent, leftmargin=2\parindent}
\setenumerate[1]{labelindent=0cm, leftmargin=*, widest=iiii}

\DeclareFontFamily{OMS}{rsfs}{\skewchar\font'60}
\DeclareFontShape{OMS}{rsfs}{m}{n}{<-5>rsfs5 <5-7>rsfs7 <7->rsfs10 }{}
\DeclareSymbolFont{rsfs}{OMS}{rsfs}{m}{n}
\DeclareSymbolFontAlphabet{\scr}{rsfs}
\DeclareSymbolFontAlphabet{\scr}{rsfs}

\DeclareMathOperator{\bir}{bir}

\DeclareMathOperator{\Hom}{Hom}

\DeclareMathOperator{\rank}{rank}

\DeclareMathOperator{\red}{red}

\DeclareMathOperator{\Locus}{Locus}

\newcommand{\sB}{\scr{B}}

\newcommand{\sD}{\scr{D}}

\newcommand{\sK}{\scr{K}}

\newcommand{\sO}{\scr{O}}

\newcommand{\sT}{\scr{T}}
\newcommand{\sU}{\scr{U}}
\newcommand{\sV}{\scr{V}}
\newcommand{\sW}{\scr{W}}

\newcommand{\cD}{\mathcal D}

\newcommand{\cF}{\mathcal F}

\newcommand{\cO}{\mathcal O}

\newcommand{\bG}{\mathbb{G}}

\newcommand{\bP}{\mathbb{P}}

\newcommand{\bR}{\mathbb{R}}

\newcommand{\fg}{\mathfrak{g}}

\theoremstyle{plain}
\newtheorem{thm}{Theorem}[section]

\newtheorem{cor}[thm]{Corollary}

\newtheorem{lem}[thm]{Lemma}

\newtheorem{prop}[thm]{Proposition}

\theoremstyle{remark}

\newtheorem{c-n-d}[thm]{Claim and Definition}

\newtheorem{rem}[thm]{Remark}

\newtheorem*{rem-nonumber}{Remark}

\numberwithin{equation}{thm}

\setlist[enumerate]{label=(\thethm.\arabic*), before={\setcounter{enumi}{\value{equation}}}, after={\setcounter{equation}{\value{enumi}}}}

\newcommand{\factor}[2]{\left. \raise 2pt\hbox{$#1$} \right/\hskip -2pt\raise -2pt\hbox{$#2$}}

\author{Jie Liu} %
\address{Jie Liu, Institute of Mathematics, Academy of Mathematics and Systems Science, Chinese Academy of Sciences, Beijing, 100190, China}
\email{\href{jliu@amss.ac.cn}{jliu@amss.ac.cn}}
\urladdr{\href{http://www.jliumath.com}{http://www.jliumath.com}}

\keywords{K\"ahler contact manifolds, rational curves, minimal model program}

\makeatletter
\@namedef{subjclassname@2020}{2020 Mathematics Subject Classification}
\makeatother
\subjclass[2020]{53D10,14E30,32J27}

\title[]{Compact K\"ahler contact manifolds}
\date{\today}

\makeatletter

\hypersetup{
	pdfauthor={\authors},
	pdftitle={\@title},
	pdfsubject={\@subjclass},
	pdfkeywords={\@keywords},
	pdfstartview={Fit},
	pdfpagemode={UseOutlines},
	pdfpagelayout={OneColumn},
	colorlinks,
	linkcolor=linkblue,
	citecolor=linkred,
	urlcolor=linkred
}
\makeatother

\begin{document}
	
	\begin{abstract}
        We prove that a non-projective compact K\"ahler contact manifold is of the form $\bP T_Y$, where $Y$ is a compact K\"ahler manifold.
	\end{abstract}

	\maketitle
	\tableofcontents

\section{Introduction}

A complex manifold $X$ of dimension $2n+1$ is said to be a \emph{contact manifold} if there exists an exact sequence of vector bundles:
\[
0\longrightarrow \cD \longrightarrow T_X \longrightarrow L\longrightarrow 0,
\]
where $T_X$ is the tangent bundle of $X$ and $L$ is a line bundle, called the \emph{contact line bundle}, such that the induced map
\[
\wedge^2\cD \longrightarrow T_X/\cD=L,\quad v\wedge v' \longmapsto [v,v']/\cD
\]
is everywhere non-degenerate. There are two basic ways to construct contact manifolds:
\begin{itemize}
	\item For any complex manifold $Y$, the projectivized tangent bundle $\bP T_Y$ carries a natural contact structure \cite[\S\,2.6]{KebekusPeternellSommeseWisniewski2000}.
	
	\item Let $\fg$ be a simple Lie algebra. Then the unique closed orbit in $\bP\fg$ for the coadjoint action is a Fano contact manifold with $b_2=1$ \cite[\S\,2]{Beauville1998}.
\end{itemize}

It is generally conjectured that \emph{every projective contact manifold arises in this manner}. For projective manifolds with $b_2\geq 2$, this statement was fully established in \cite{KebekusPeternellSommeseWisniewski2000} and \cite{Demailly2002} (see also \cite{Druel1998} for the low-dimensional case). As no compact contact manifold outside the aforementioned class is currently known to exist, it is natural to ask whether the projectivity assumption is essential. In this short note, we investigate the K\"ahler case and prove the following result:

\begin{thm}
	\label{thm.MainThm}
	Let $(X,L)$ be a compact K\"ahler contact manifold, which is not projective. Then there exists a compact K\"ahler manifold $Y$ such that $(X,L)$ is isomorphic to $(\bP T_Y, \cO_{\bP T_Y}(1))$.
\end{thm}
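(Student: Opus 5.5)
Since $X$ is not projective, the strategy is to show that $K_X$ fails to be pseudo-effective, produce a rational curve family of contact lines, and then run the Kebekus–Peternell–Sommese–Wiśniewski argument in the Kähler setting. Recall that $K_X\cong L^{\otimes -(n+1)}$.

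The first step is to show that $K_X$ is not pseudo-effective. A contact manifold has no holomorphic $2$-forms compatible with the contact structure, and more generally $h^0(X,\Omega_X^2)$ obstructs projectivity. In the Kähler setting, $X$ is projective as soon as $H^{2,0}(X)=0$. So non-projectivity gives a nonzero holomorphic $2$-form $\sigma$. The known vanishing results for contact manifolds then apply: if $K_X$ were pseudo-effective (equivalently $L^{-1}$ pseudo-effective), Demailly's argument (\cite{Demailly2002}, integrability of $\cD$ via Bochner-type estimates for $L^{-1}$ pseudo-effective) would force the contact distribution to be integrable, which is a contradiction. I expect Demailly's proof to transfer verbatim to compact Kähler manifolds, since it uses only a singular Hermitian metric on $L^{-1}$ with nonnegative curvature.

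Next, $K_X$ not pseudo-effective on a compact Kähler manifold implies that $X$ is uniruled, by Brunella and Ou's theorem. Take a minimal covering family of rational curves $\sK$. Since $L$ is positive on any curve with $K_X\cdot C<0$ and $-K_X=(n+1)L$, we get $-K_X\cdot C=(n+1)L\cdot C\ge n+1$. The KPSW-type analysis (their Proposition on contact lines: a general minimal curve is tangent to $\cD$ with $L\cdot C=1$, or $X$ has $b_2$ behaviour) should carry over. The key geometric input is to show that the family $\sK$ has $L\cdot C=1$ and that the rationally chain connected quotient with respect to $\sK$ is a fibration $X\to Y$ whose general fibre is $\bP^n$, with the curves being lines.

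The crucial step, and the one I expect to be the main obstacle, is to show that this quotient is an honest smooth morphism with all fibres $\bP^n$ that are Legendrian. I would obtain this by proving that $\sK$ is unsplit. Here the non-projectivity gives the extra leverage: if $X$ were rationally connected by $\sK$-chains, then $X$ would be rationally connected, hence projective, which is a contradiction. So the quotient $Y$ has positive dimension. A dimension count using the bend-and-break bound $\dim\Locus(\sK_x)\ge -K_X\cdot C-1\ge n$, together with the fact that loci of contact curves through a point are $\cD$-isotropic (hence have dimension $\le n$), forces $\dim\Locus(\sK_x)=n$, $-K_X\cdot C=n+1$, and $L\cdot C=1$. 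One would then need to exclude splitting. Any degeneration splits $C$ into curves on which $L$ is a positive integer, but $L\cdot C=1$ rules this out, so $\sK$ is unsplit. An unsplit family of maximal-dimensional loci yields an extremal-ray-type elementary contraction in the Kähler category (via Höring–Peternell's Kähler MMP, or directly via the Ionescu–Wiśniewski-type smoothness argument). The result is a $\bP^n$-bundle $\varphi\colon X\to Y$ with $Y$ a compact Kähler manifold of dimension $n+1$ and Legendrian fibres. Finally, the standard KPSW argument identifies a contact manifold with a Legendrian $\bP^n$-bundle structure with $\bP T_Y$ and $L$ with $\cO_{\bP T_Y}(1)$. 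This argument assigns to each point its hyperplane $d\varphi(\cD_x)\subset T_{Y,\varphi(x)}$, giving a map $X\to\bP T_Y$ which is fibrewise linear, hence an isomorphism.
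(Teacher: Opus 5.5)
Your first steps agree with the paper: $K_X$ is not pseudo-effective by Demailly, whose result is already stated for compact K\"ahler manifolds, and $X$ is uniruled by Ou. The detour through $H^{2,0}(X)\neq 0$ is never used. After that there are two genuine gaps, at exactly the two places where the paper does its real work.

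\textbf{Unsplitness.} You argue that $L\cdot C=1$ forbids splitting because $L$ is a positive integer on every component of a limit cycle. But you only know $L\cdot\Gamma>0$ when $K_X\cdot\Gamma<0$, and components of a limit cycle need not be $K_X$-negative. On a non-projective K\"ahler contact manifold $L$ is not ample and in general not even nef. For example, take $X=\bP T_Y$ with $Y$ the blow-up of a non-algebraic torus at a point. Over a line $\ell$ in the exceptional divisor, $T_Y|_\ell$ has a quotient $\cO_{\bP^1}(-1)$, which gives a curve of $\cO_{\bP T_Y}(1)$-degree $-1$. So a cycle of total $L$-degree $1$ is not excluded numerically. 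The paper instead uses the K\"ahler cone theorem (Hacon--P\u{a}un, Ou) to choose an \emph{extremal} rational curve of minimal $-K_X$-degree on its ray. Every component of a limit cycle then lies on the same ray with strictly smaller positive $-K_X$-degree, which is absurd (Lemma~\ref{lem.unsplit}).

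\textbf{Circularity in $L\cdot C=1$.} A rational curve is forced to be tangent to $\cD$ only once $L\cdot C=1$, because the restricted contact form lies in $H^0(\bP^1,\cO_{\bP^1}(L\cdot C-2))$. So the isotropy bound cannot be used to prove $L\cdot C=1$. The correct argument is that $L\cdot C\geq 2$ forces $\Locus\sV(0\mapsto x)=X$, so $X$ is rationally connected and hence projective (Lemma~\ref{lem.Degree-Locus}). This needs a K\"ahler bend-and-break and Ionescu--Wi\'sniewski inequality, which the paper proves using Fujiki's Moishezon results on Douady spaces.

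\textbf{From an unsplit family to a $\bP^n$-bundle.} The tools you cite do not do this. There is no K\"ahler contraction theorem in arbitrary dimension; the Kähler MMP of H\"oring--Peternell is three-dimensional. Knowing that the $\sK$-chain quotient has positive-dimensional base does not give $n$-dimensional fibres. On the homogeneous Fano contact manifolds with $b_2=1$ other than $\bP^{2n+1}$, the contact lines form an unsplit family with $\dim\Locus(\sK_x)=n$, yet chains of them connect any two points. So you must use non-projectivity seriously to show that chains do not leave $\Locus(\sK_x)$, and you must control the special fibres.

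The paper does this by a different route.
\begin{enumerate}
\item Take a general fibre $F$ of the MRC quotient. Since $X$ is not projective, $F$ is a proper subvariety. The contact structure then gives a nonzero map $L^{\oplus m}\to T_X|_F$, and its image $\cF\subset T_F$ is ample on the curves.
\item Araujo--Druel--Kov\'acs yields a $\bP^n$-bundle on a dense open subset of $F$ whose fibres contain the $\sK$-curves as lines (Proposition~\ref{prop.Pn-bundle}).
\item The paper shows $\sV_x$ is irreducible for general $x$ (Corollary~\ref{cor.irred-Vx}).
\item It takes the Douady family $\sW$ of these $\bP^n$'s and proves the evaluation map is finite and birational, hence an isomorphism (Lemma~\ref{lem.p-iso}). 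Finiteness uses $\dim\Locus\sV(0\mapsto x)=n$ for \emph{all} $x$, which is where global unsplitness is needed. Birationality uses irreducibility of $\sV_x$.
\item Claudon--H\"oring makes $q$ projective, Kebekus's characterization shows every fibre normalizes to $\bP^n$, and Fujita gives $Y$ smooth and $X\cong\bP E$.
\end{enumerate}
Only after all this does your final KPSW identification $E\cong T_Y$ apply.
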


This result was already established for K\"ahler threefolds in \cite[Theorem 4.1]{Peternell2001a}. To prove Theorem \ref{thm.MainThm}, we follow the same strategy as in \cite{KebekusPeternellSommeseWisniewski2000} for the projective setting. Roughly speaking, the arguments in \cite{KebekusPeternellSommeseWisniewski2000,Demailly2002} rest on three core ingredients: Demailly's Frobenius integrality theorem \cite{Demailly2002}, the Ionescu–Wi\'sniewski inequality \cite[IV, \S\,2]{Kollar1996}, and the contraction theorem in Mori theory. 

In the K\"ahler setting, Demailly’s integrality theorem holds true, and the Ionescu--Wi'sniewski inequality can be derived using the same arguments as for projective manifolds, with suitable adaptations via Douady/Barlet spaces (see \S\,\ref{s.IW-inq} for details). However, the contraction theorem for general K\"ahler spaces remains open, except in dimensions $\leq 3$. On the other hand, combining results from \cite{Demailly2002} and \cite{Ou2025} implies that every compact K\"ahler contact manifold $X$ is uniruled. We therefore consider the maximally rationally connected quotient of $X$, instead of appealing to the contraction theorem. This allows us to apply a result of C.~Araujo \cite{Araujo2006,AraujoDruelKovacs2008} to construct a dominant family of projective spaces contained in $X$, which we then show induces a projective bundle structure of $X$.

Throughout this paper, for a compact complex space $X$, we will denote by $\sB(X)$ and $\sD(X)$ its Barlet space and Douady space, respectively. For a vector bundle $E$, we denote by $\bP E$ its projectivization in the sense of Grothendieck.

    \subsection*{Acknowledgments}   
I would like to thank Wenhao Ou for useful discussions and Andreas H\"oring for his remarks on the first draft. The author is supported by the National Key Research and Development Program of China (No. 2021YFA1002300), the Youth Innovation Promotion Association CAS, the CAS Project for Young Scientists in Basic Research (No. YSBR-033) and the NSFC grant (No. 12288201).
	
\section{Ionescu--Wi\'sniewski inequality}
\label{s.IW-inq}

The proof of the Ionescu--Wiśniewski inequality in the projective case, as given in \cite[IV, §2]{Kollar1996}, can be adapted to the Kähler setting using the Moishezonness results for Barlet spaces from \cite{Fujiki1982}. However, since we have not been able to locate this statement in the literature, we provide a detailed proof for the reader's convenience. 

Recall that a compact complex space $X$ is of \emph{Fujiki class} (resp. \emph{Moishezon}) if the underlying reduced complex space $X_{\red}$ is bimeromorphic to a compact K\"ahler manifold (resp. a projective manifold).

\begin{prop}[\protect{\cite[(4.3), Proposition 4]{Fujiki1982}}]
	\label{prop.Def-Moishezon}
	Let $X$ be a reduced compact complex space of Fujiki class. Let $Z$ be an irreducible and reduced closed analytic subspace of $\sD(X)$ such that the general fiber of the universal family $U_Z\rightarrow Z$ is reduced and irreducible. Then for any point $x\in X$, the reduced analytic subspace $Z_x$ of $Z$ parametrizing cycles in $Z$ passing through $x$ is Moishezon. 
\end{prop}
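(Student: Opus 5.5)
The strategy is to reduce the statement to Fujiki's theorem that every irreducible component of the Douady (or Barlet) space of a Fujiki-class space is itself compact and of Fujiki class. We then show that, for a fixed point $x$, the subvariety $Z_x$ carries "enough" meromorphic functions: its algebraic dimension equals its dimension. Since $Z_x$ is a closed subspace of $Z$, it is compact and of Fujiki class. For a Fujiki-class space, Moishezonness is equivalent to the absence of non-algebraic directions, namely the vanishing of the Kähler-type part in the algebraic reduction. Concretely, it suffices to produce, on a resolution of $Z_x$, a big line bundle, or equivalently to show that the algebraic reduction of $Z_x$ is generically finite.

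The key steps are as follows.

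\textbf{(i)} Pass to the universal family $U_{Z_x}\to Z_x$, which is again of Fujiki class, together with the evaluation map $e\colon U_{Z_x}\to X$. Every fiber of $U_{Z_x}\to Z_x$ contains the point $x$, so $e$ contracts the section $s_x=\{x\}\times Z_x$ to a point.

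\textbf{(ii)} Replace $X$ by a Kähler resolution and choose a Kähler class $\omega$. Pull back $e^*\omega$ and integrate over the fibers to obtain a class $\beta$ on $Z_x$. The class $\beta$ is the restriction of the natural Kähler-type class on the Barlet space, and on the locus where the cycles are reduced and irreducible it is positive, because distinct cycles are distinct subvarieties.

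\textbf{(iii)} Exploit the fact that all the cycles pass through $x$. Consider the normal-cone map, or the "tangent directions at $x$" map, which sends a general cycle $C$ to its tangent cone at $x$, or to its strict transform in the blow-up $\mathrm{Bl}_xX$ meeting the exceptional divisor $E\cong\bP^{\dim X-1}$. Restricting $e$ composed with the blow-up to $E$ yields a meromorphic map from $U_{Z_x}$ to a projective space.

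\textbf{(iv)} Argue by induction on the dimension of the cycles, or on $\dim Z_x$, that this map and the family of cycles through $x$ separate points generically. Then a projective variety, namely the Chow or Hilbert variety of subvarieties of $E$ or of a neighbourhood, dominates $Z_x$ with compact fibers that are again families of cycles through a fixed point. The fibers are lower-dimensional, so they are Moishezon by induction, and a fibration of Fujiki-class spaces with Moishezon base and Moishezon general fibers is Moishezon.

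The main obstacle is step (iv): making the tangent-cone map genuinely meromorphic on $Z_x$, and controlling its fibers. The cycles through $x$ with the same tangent data need not form a lower-dimensional family of the same type. To handle this, I would first try Fujiki's device of relative Barlet spaces: consider the family of cycles in $\mathrm{Bl}_xX$ meeting $E$, and use that $E$ is projective. The irreducible components of the Barlet space of $E$ are then projective, and the "restriction to $E$" map is meromorphic by the theory of relative cycle spaces. Its fibers consist of cycles through a fixed subvariety of $E$, which allows the induction to proceed.
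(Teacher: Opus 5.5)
There is a genuine gap. Step (iv) ends with the principle that a fibration of Fujiki-class spaces with Moishezon base and Moishezon general fibres is Moishezon, and this principle is false. A $2$-dimensional complex torus of algebraic dimension $1$ maps onto an elliptic curve, and all its fibres are elliptic curves. A non-projective elliptic K3 surface maps onto $\bP^1$. Neither total space is Moishezon. What you would actually need is that the map from $Z_x$ to your projective Chow/Hilbert variety of $E$ is a \emph{Moishezon morphism}, and that is no easier than the statement itself.

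The induction also does not close. The tangent-data map can be constant on all of $Z_x$: take conics in $\bP^2$ through $x$ that are tangent to a fixed line there. So its fibres need not have smaller dimension. Moreover, those fibres are families of cycles in $\mathrm{Bl}_xX$ through a fixed subvariety of $E$, not families of cycles through a point of the kind your induction hypothesis covers.

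Step (ii) does not help either. A fibre-integrated positive class is only a K\"ahler-type class, which every space of Fujiki class already carries. It gives no information about the algebraic dimension, and its claimed positivity is not justified anyway.

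The missing idea is to work with the whole family, relative to $X$, rather than with $Z_x$ directly. The paper first uses that $Z$ is compact. It then uses that, under the hypothesis on the general fibre of $U_Z\to Z$ (which your argument never uses), the evaluation map $e_Z\colon U_Z\to X$ is a Moishezon morphism \cite[(4.3), Proposition 4]{Fujiki1982}. Consequently the fibre $e_Z^{-1}(x)$ is a Moishezon space. Then $Z_x=q(e_Z^{-1}(x))$ is Moishezon, as the image of a Moishezon space under $q\colon U_Z\to Z$. This relative Moishezon statement is the ingredient your approach lacks. Once it is available, no tangent-cone analysis, blow-up, or induction on dimension is needed.
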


\begin{proof}
	By \cite[Corollary]{Fujiki1984}, the complex space $Z$ is compact. Then \cite[(4.3), Proposition 4]{Fujiki1982} says that the evaluation morphism $e_Z\colon U_Z\rightarrow X$ is Moishezon, so the analytic subspace $e_Z^{-1}(x)$ is a Moishezon space by \cite[(1.7)]{Fujiki1982}. Hence the image $Z_x=q(e_Z^{-1}(x))$ is Moishezon by \cite[(1.6), Proposition 1]{Fujiki1984}, where the map $q\colon U_Z\rightarrow Z$ is the natural projection.
\end{proof}

Let $X$ be a compact K\"ahler manifold. Then the space $\Hom(\bP^1,X)$ of holomorphic maps $f\colon \bP^1\rightarrow X$ is an open subset of $\sD(\bP^1\times X)$. Let $\Hom_{\bir}(\bP^1,X)$ be the open subset corresponding to the holomorphic maps $f\colon \bP^1\rightarrow X$ which are birational onto their image.  Then there exist holomorphic maps
\begin{equation}
	\label{eq.morp-cycle}
	u\colon \Hom_{\bir}(\bP^1,X) \longrightarrow \sD(\bP^1\times X) \longrightarrow \sB(\bP^1\times X) \xlongrightarrow{p_*} \sB(X),
\end{equation}
where the first map is the natural open embedding, the second map is the Douady--Barlet map, and the last one is the push-forward map associated to the second projection $p\colon \bP^1\times X\rightarrow X$ \cite{Campana1980}. 

Let $\sV$ be an irreducible component of $\Hom_{\bir}(\bP^1,X)$. Denote by $\Locus \sV$ the image of $\sV\times \bP^1$ in $X$ under the evaluation map $\Hom_{\bir}(\bP^1,X)\times \bP^1\rightarrow X$. For any points $x,y\in X$, we will denote by $\sV_x$ (resp. $\sV(0\mapsto x)$, or $\sV(0\mapsto x, \infty\mapsto y)$) the closed analytic subspace of $\sV$ consisting of maps $f\colon \bP^1\rightarrow X$ such that $x\in f(\bP^1)$ (resp. $f(0)=x$, or $f(0)=x$ and $f(\infty)=y$). The loci $\Locus \sV(0\mapsto x)$ and $\Locus \sV(0\mapsto x, \infty\mapsto y)$ are defined in a similar way. We call $\sV$ an \emph{unsplit family of rational curves} if $u(\sV)$ is compact and a \emph{generically unsplit family of rational curves} if $u(\sV_x)$ is compact for general points $x\in \Locus \sV$. 

We need the following “bend-and-break lemma” for compact Kähler manifolds. Thanks to the Moishezon property of Douady spaces given in Proposition \ref{prop.Def-Moishezon},  one can adopt the proof in the projective case \cite[\S\,II.5]{Kollar1996} to the Kähler case.

\begin{prop}[\protect{\cite[II, Theorem 5.4 and Corollary 5.6]{Kollar1996}}]
	\label{prop.Bend-Break}
	Let $X$ be a compact K\"ahler manifold and let $\sV$ be a closed irreducible analytic subspace of $\Hom_{\bir}(\bP^1,X)$. If there exist points $x,y\in X$ such that $\dim \sV(0\mapsto x, \infty \mapsto y)\geq 2$, then $u(\sV_x)$ is not compact.  
\end{prop}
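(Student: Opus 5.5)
We argue by contradiction: assume $u(\sV_x)$ is compact, and produce a compact complete curve in $\sV(0\mapsto x,\infty\mapsto y)$ along which the image cycles degenerate. This is Mori's bend-and-break, following \cite[\S\,II.5]{Kollar1996}.

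The first step is to get down to a one-dimensional family that is not swept out by reparametrizations. The group $G=\mathrm{Aut}(\bP^1;0,\infty)\cong\bG_m$ acts on $\sV(0\mapsto x,\infty\mapsto y)$ by precomposition. Since every $f\in\sV$ is birational onto its image, this action has finite stabilizers. So $\dim\sV(0\mapsto x,\infty\mapsto y)\ge 2$ implies that the image of this space in $\sB(X)$ under $u$ has dimension $\ge 1$: the orbits are one-dimensional and map to single cycles.

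Next, choose an irreducible component $W$ of this image with $\dim W\ge 1$, lying inside $u(\sV_x)$. Under our assumption $u(\sV_x)$ is compact, so $W$ is contained in the compact space $\overline{u(\sV_x)}\subset\sB(X)$. Apply Proposition~\ref{prop.Def-Moishezon} to the corresponding Douady component; its general fibre is an irreducible reduced rational curve. This gives that the subspace of cycles through $x$ is Moishezon. Hence we can pick an irreducible compact curve $C\subset W$, which we may take to be the normalization of a curve in a Moishezon space, so $C$ is a smooth projective curve. A general point of $C$ parametrizes an irreducible rational curve through both $x$ and $y$.

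Now we run the algebraic bend-and-break on this projective curve. Pull back the universal family over $\sB(X)$ to $C$ to obtain a surface $S\to C$ together with a map $F\colon S\to X$. After normalizing $S$ and base-changing $C$ by a finite cover, we get a ruled surface $\bP^1\times C^\circ$ over an open subset $C^\circ$. It has two sections $\sigma_0,\sigma_\infty$ that are contracted by $F$ to $x$ and $y$. Because $C$ is compact and projective, the argument of \cite[II, Theorem~5.4]{Kollar1996} applies: it uses the Hodge index theorem on a resolution of a compactification of $S$, together with the rigidity lemma. It shows that if no fibre over $C$ splits, then $F$ would contract a curve it cannot contract, a contradiction. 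So some fibre over $C$ is a reducible or non-reduced cycle $F_*(\text{fibre})=\sum a_iC_i$. Alternatively, the limit contains the two points $x,y$ joined by a chain, as in \cite[II, Corollary~5.6]{Kollar1996}. In either case we obtain a cycle in the closure of $u(\sV_x)$ that is not the image of any $f\in\sV$, since every such image is irreducible and reduced. This contradicts the compactness of $u(\sV_x)$.

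The main obstacle is the middle step: producing a compact curve $C$ that is genuinely algebraic inside the family, together with a projective model of the surface $S$. This is what lets the intersection-theoretic argument run. In the Kähler setting we cannot simply take a general hyperplane section. Instead we must use Proposition~\ref{prop.Def-Moishezon} to know that the relevant Barlet subspace through $x$ is Moishezon, so that it contains plenty of compact curves whose total spaces are Moishezon surfaces. On a Moishezon (hence projective after resolution) smooth surface, the Hodge index argument of \cite[II, 5.4]{Kollar1996} goes through verbatim.
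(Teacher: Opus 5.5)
Your proposal is correct and follows essentially the same route as the paper. Both use the $\bG_m$-action, then Proposition~\ref{prop.Def-Moishezon} to produce a compact curve in the family, and then Koll\'ar's negativity argument on a ruled surface with two contracted sections; the paper phrases the endgame as follows: compactness of $u(\sV_x)$ forces the normalized closure $Y\to\widebar{T}$ to be a $\bP^1$-bundle with two disjoint contracted sections, which is impossible.

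One small correction to your first step: the bound $\dim W\geq 1$ does not follow from finite stabilizers, since that only bounds the fibre dimension of $u$ from below. It follows instead from the fact that each fibre of $u$ on $\sV(0\mapsto x,\infty\mapsto y)$ is a finite union of $\bG_m$-orbits. Indeed, two birational parametrizations of the same curve differ by an automorphism of $\bP^1$ sending $0$ and $\infty$ into the finite sets $f^{-1}(x)$ and $f^{-1}(y)$.
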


\begin{proof}
	We follow the arguments in \cite{Kollar1996}, pointing out only the necessary modifications for the K\"ahler case.  The group of automorphisms of $\bP^1$ fixing two points is the multiplicative group $\bG_m$. Since $\sV(0\mapsto x, \infty\mapsto y)$ is Moishezon by Proposition \ref{prop.Def-Moishezon} (see \cite[(1.7)]{Fujiki1982}), we can find a one-dimensional closed irreducible analytic subspace of $\sV(0\mapsto x,\infty\mapsto y)$ that is not contained in its $\bG_m$-orbit. Let $T$ be its normalization and let $\widebar{T}$ be a smooth compactification of $T$. Now we construct a commutative diagram as follows:
	\[
	\begin{tikzcd}[row sep=large, column sep=large]
		\bP^1\times T  \arrow[r,hookrightarrow] 
		    & Y \arrow[r] \arrow[d] \arrow[dl]
		        &  S \arrow[d,hook]\\
		\widebar{T}
		    &   X\times \widebar{T}  \arrow[r] \arrow[l]
		        & X,
	\end{tikzcd}
	\]
	where $Y$ is the normalization of the closure of the image of the natural map $\bP^1\times T\rightarrow X\times \widebar{T}$ and $S$ is the closure of the image of the evaluation map $\bP^1\times T\rightarrow X$. 
	
	Assume to the contrary that $u(\sV_x)$ is compact so that $u(\sV_x)$ is a closed subset of $\sB(X)$. Then the fibers of $Y\rightarrow \widebar{T}$ are irreducible and reduced. Note that $Y\rightarrow \widebar{T}$ is flat \cite[III, Proposition 9.7]{Hartshorne1977} and therefore the arithmetic genus of the fibers of $Y\rightarrow \widebar{T}$ is constant and hence equal to $0$. The fibers of $Y\rightarrow \widebar{T}$ are therefore smooth rational curves, so $Y\rightarrow \widebar{T}$ is a ruled surface. One observes that $Y\rightarrow \widebar{T}$ admits two disjoint sections contracted to points in $S$. In particular, by the Negativity Lemma, the argument in \cite[II, Theorem 5.4]{Kollar1996} then yields a contradiction.
\end{proof}

 Given a compact K\"ahler manifold $X$, a line bundle $L$ on $X$ and an irreducible closed subset $\sV$ of $\Hom_{\bir}(\bP^1,X)$, we denote $\deg f^*L$ by $L\cdot \sV$, where $f\in \sV$. Clearly, the number $L\cdot \sV$ is independent of the choice of $f$. Following the argument of \cite[IV, Corollary 2.6]{Kollar1996} verbatim, we apply Proposition~\ref{prop.Bend-Break} to obtain the following Ionescu--Wi\'sniewski inequality for compact K\"ahler manifolds. 

\begin{prop}
	\label{prop.IW-Ineq}
	Let $X$ be a compact K\"ahler manifold and let $\sV\subset \Hom_{\bir}(\bP^1,X)$ be a generically unsplit family of rational curves. Then for any point $x\in \Locus\sV$ such that $u(\sV_x)$ is compact, we have
	\[
	\dim X + (-K_X\cdot \sV) \leq \dim \Locus \sV + \dim \Locus\sV(0\mapsto x) + 1.
	\]
\end{prop}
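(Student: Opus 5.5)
We follow the argument of \cite[IV, Proposition 2.5 and Corollary 2.6]{Kollar1996}, combining a lower bound for $\dim\sV(0\mapsto x)$ from deformation theory with an upper bound obtained from Proposition~\ref{prop.Bend-Break}.

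\textbf{Step 1: lower bound.} For every $f\in\sV(0\mapsto x)$, deformation theory of morphisms with a fixed point gives
\[
\dim_{[f]}\Hom(\bP^1,X;0\mapsto x)\geq -K_X\cdot\sV+\dim X-\dim X=-K_X\cdot\sV .
\]
This bound is local and analytic, so it holds verbatim in the K\"ahler setting. Because $\sV$ is an irreducible component of $\Hom_{\bir}(\bP^1,X)$, every irreducible component of $\sV(0\mapsto x)$ therefore has dimension at least $-K_X\cdot\sV$. One should first record this as $\dim\sV\geq -K_X\cdot\sV+\dim X$ and then cut by the $\dim X$ conditions $f(0)=x$.

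\textbf{Step 2: the incidence variety.} Consider the evaluation map
\[
F\colon \sV(0\mapsto x)\times\bP^1\to X,\qquad (f,t)\mapsto f(t),
\]
whose image is $\Locus\sV(0\mapsto x)$. Pick $y$ in the image and estimate the fiber dimension of $F$ over $y$. The points with $t\neq0$ in the fiber over $y$ correspond, after applying the $\mathrm{Aut}(\bP^1,0)$-action that sends $t$ to $\infty$, to $\sV(0\mapsto x,\infty\mapsto y)\times(\text{stabilizer data})$. Since $u(\sV_x)$ is compact, Proposition~\ref{prop.Bend-Break} forces
\[
\dim\sV(0\mapsto x,\infty\mapsto y)\leq 1,
\]
and this dimension is exactly the $\bG_m$-orbit. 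Choosing a general point $y\neq x$ avoids the contribution from $t=0$, because the locus $t=0$ maps entirely to $x$. Hence a general fiber of $F$ has dimension at most $1$, and
\[
\dim\sV(0\mapsto x)+1\leq\dim\Locus\sV(0\mapsto x)+1 .
\]
Doing the bookkeeping as in Koll\'ar, we count instead $\sV(0\mapsto x)/\mathrm{Aut}(\bP^1,0)$, where $\mathrm{Aut}(\bP^1,0)$ has dimension $2$. The bound obtained this way is
\[
-K_X\cdot\sV-2+1\leq\dim\Locus\sV(0\mapsto x).
\]

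\textbf{Step 3: the global bound.} Apply the same incidence argument globally to $\sV\times\bP^1\to\Locus\sV$ and to the family of curves through a point. Concretely, consider the universal family of curves $U\to u(\sV)$, with fibers of dimension $1$, and its image $\Locus\sV$. The curves through a general point $x$ form a family of dimension at most $\dim\Locus\sV(0\mapsto x)-1$, again by Step 2 and bend-and-break. The space of curves has dimension
\[
\dim\sV-\dim\mathrm{Aut}(\bP^1)\geq \dim X-K_X\cdot\sV-3 .
\]
Combining,
\[
\dim X-K_X\cdot\sV-3+1\leq\dim\Locus\sV+\dim\Locus\sV(0\mapsto x)-1,
\]
which is exactly the inequality of Proposition~\ref{prop.IW-Ineq}.

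\textbf{Main obstacle.} The hard step is justifying the choice of $x$. The incidence counts use generality, while the statement asks only that $u(\sV_x)$ be compact. Upper semicontinuity of fiber dimension on compact spaces lets the inequality for $\dim\Locus\sV(0\mapsto x)$ pass to such special $x$. To make this work one uses the compactness of $u(\sV_x)$, which ensures the Barlet images are compact analytic spaces so that these dimension arguments apply.
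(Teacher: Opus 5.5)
Your overall route is the paper's. The paper just runs Koll\'ar's proof of \cite[IV, Corollary 2.6]{Kollar1996} verbatim with Proposition~\ref{prop.Bend-Break} as the K\"ahler bend-and-break, and your Step~3 count is exactly that argument and yields the stated inequality.

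However, Step~2 contains a false claim that should be removed. For $y\neq x$, the map $(f,t)\mapsto t$ fibres $F^{-1}(y)$ over $\bP^1\setminus\{0\}$ with fibres isomorphic to $\sV(0\mapsto x,\infty\mapsto y)$: reparametrize by an automorphism fixing $0$ and sending $\infty$ to $t$. So $\dim F^{-1}(y)=\dim\sV(0\mapsto x,\infty\mapsto y)+1$. Whenever this fibre is nonempty its dimension is exactly $2$, not at most $1$: the $\bG_m$-orbit contributes one dimension and the position $t$ another. Hence your displayed inequality $\dim\sV(0\mapsto x)+1\leq\dim\Locus\sV(0\mapsto x)+1$ is false ($n+1$ versus $n$ for lines in $\bP^n$). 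What bend-and-break actually gives is $\dim\sV(0\mapsto x)\leq\dim\Locus\sV(0\mapsto x)+1$, which is the version your ``bookkeeping'' sentence and Step~3 use.

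Your ``main obstacle'' is not a real one, and you misplace the role of the hypothesis. Compactness of $u(\sV_x)$ is used only to apply Proposition~\ref{prop.Bend-Break} at the given $x$, not to make the dimension counts legitimate, and no generality of $x$ is needed. Since $\sV$ is irreducible, every irreducible component of every nonempty fibre of $\mathrm{ev}_0\colon\sV\to\overline{\Locus\sV}$ has dimension at least $\dim\sV-\dim\Locus\sV$. So for \emph{every} $x\in\Locus\sV$ one has $\dim\sV(0\mapsto x)\geq\dim X+(-K_X\cdot\sV)-\dim\Locus\sV$. This, rather than Step~1's bound $-K_X\cdot\sV$, is the lower bound the inequality needs when $\Locus\sV\neq X$. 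Combining it with $\dim\sV(0\mapsto x)\leq\dim\Locus\sV(0\mapsto x)+1$ at the same $x$ gives the claim directly.
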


\begin{rem}
	The statements of Propositions \ref{prop.Bend-Break} and \ref{prop.IW-Ineq} also hold for compact complex manifolds of Fujiki class.
\end{rem}

\section{Deformation of extremal rational curves}
\label{s.Def-Rat-Curve}

Instead of relying on the contraction theorem from Mori's theory as used in \cite[§\,2.5]{KebekusPeternellSommeseWisniewski2000}, which has not been established in the K\"ahler setting, we will apply the deformation theory of rational curves, combined with the recently established cone theorem for Kähler spaces, to construct a generic projective bundle structure on compact K\"ahler contact manifolds.

\subsection{Extremal rational curves}
\label{ss.Extremal-Rat-Cur}

Mori's cone theorem has recently been generalized to K\"ahler spaces. More precisely, let $X$ be a compact K\"ahler manifold. Let $\widebar{NA}(X)\subset N_1(X)$ be the closed cone generated by the classes of positive closed currents (see \cite[\S\,3.2]{HoeringPeternell2016}). According to \cite[Theorem 0.5]{HaconPaun2024} and \cite[Theorem 1.1]{Ou2025} (see also \cite[Theorem 1.2]{HoeringPeternell2016} for the $3$-dimensional case),  there exist at most countably many rational curves $\{\Gamma_i\}_{i\in I}$ such that
\[
\widebar{NA}(X) = \widebar{NA}(X)_{K_X\geq 0} +\sum_{i\in I} \bR^+[\Gamma_i].
\]
Let $C\subset X$ be an irreducible rational curve. We call $C$ an \emph{extremal rational curve} if there exists $i\in I$ such that $[C]\in \bR^+[\Gamma_i]$ and $-K_X\cdot C=\min \{-K_X\cdot \Gamma\}$, where $\Gamma$ runs among all irreducible rational curves with $[\Gamma]\in \bR^+[\Gamma_i]$. 

\begin{lem}
	\label{lem.unsplit}
	Let $C$ be an extremal rational curve with $f\colon \bP^1\rightarrow X$ its normalization. Let $\sV\subset \Hom_{\bir}(\bP^1,X)$ be an irreducible component containing $[f]$. Then $\sV$ is an unsplit family of rational curves.
\end{lem}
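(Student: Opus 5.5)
To show $u(\sV)$ is compact, I will take a sequence (or a point in the closure) of cycles $u(f_k)$, $f_k\in\sV$, and show that any limit cycle in $\sB(X)$ is again the image of a birational map from $\bP^1$ lying in $\sV$. All curves $f(\bP^1)$ with $f\in\sV$ are deformations of $C$ inside the irreducible family, so they have the same class $[C]\in N_1(X)$. Here I use that the class is locally constant along connected families of cycles: cohomology classes of cycles in a connected component of $\sB(X)$ coincide. Also, the cycles in $u(\sV)$ have bounded volume with respect to a fixed K\"ahler form $\omega$, namely $\omega\cdot C$. By Bishop's theorem (compactness of bounded-volume components of the Barlet space for K\"ahler $X$, \cite{Fujiki1984}), the closure $\overline{u(\sV)}$ is compact. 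Its boundary points are effective $1$-cycles $Z=\sum_j a_j C_j$ with $a_j\in\mathbb{Z}_{>0}$ and $[Z]=[C]$.

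The key step is to show that any such limit cycle is a single reduced irreducible rational curve. Rationality of every component $C_j$ follows from standard degeneration: the limit of images of $\bP^1$ is a connected tree-like cycle of rational curves, since the normalized family is a ruled surface degenerating, as in the proof of Proposition~\ref{prop.Bend-Break}. Each $[C_j]$ is the class of a positive closed current, so $[C_j]\in\widebar{NA}(X)$. Since $\sum_j a_j[C_j]=[C]$ lies on the extremal ray $R=\bR^+[\Gamma_i]$, and $R$ is an extremal ray of the cone $\widebar{NA}(X)$ (extremality of $R$ is part of the cone theorem of \cite{HaconPaun2024,Ou2025}), each $[C_j]\in R$. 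Hence each $C_j$ is an irreducible rational curve with class in $R$, and by minimality $-K_X\cdot C_j\ge -K_X\cdot C$. Since $K_X\cdot R<0$, summing gives
\[
-K_X\cdot C=\sum_j a_j(-K_X\cdot C_j)\ge \Big(\sum_j a_j\Big)(-K_X\cdot C),
\]
so $\sum_j a_j=1$. Thus $Z$ is one reduced irreducible rational curve.

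Finally, I must show that $Z$ lies in $u(\sV)$ itself, not merely in $u$ of some other component. The normalization $g\colon\bP^1\to Z$ is birational onto its image. The family of maps can be reparametrized near the limit: the universal family over the closure is flat with reduced irreducible rational fibers near $Z$, so after normalization it is locally a $\bP^1$-bundle. Hence $g$ is a limit of the $f_k$ in $\Hom_{\bir}(\bP^1,X)$ up to $\mathrm{Aut}(\bP^1)$. Since $\sV$ is a closed irreducible component, invariant under $\mathrm{Aut}(\bP^1)$, we get $g\in\sV$. Therefore $\overline{u(\sV)}=u(\sV)$, which is compact.

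I expect the main obstacle to be the extremality step: justifying that a ray $\bR^+[\Gamma_i]$ from the cone decomposition is genuinely extremal in $\widebar{NA}(X)$, so that a decomposition of $[C]$ into classes of positive currents forces each summand into the ray. I would first try to extract this from the K\"ahler cone theorem via a supporting nef class: a nef class $\alpha$ with $\alpha^\perp\cap\widebar{NA}(X)=R$. Then $\alpha\cdot C_j\ge0$ and $\sum_j a_j\,\alpha\cdot C_j=0$ give $\alpha\cdot C_j=0$, so $[C_j]\in R$.
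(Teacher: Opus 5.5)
Your proof is correct and follows the paper's argument: compactness of Barlet components, rationality of the components of a limit cycle, extremality of the ray forcing each component into it, and minimality of the $-K_X$-degree forcing the limit to be a single reduced irreducible curve. Your last paragraph only spells out what the paper absorbs into its citation of Koll\'ar II, Proposition 2.2, namely that an integral limit cycle already lies in $u(\sV)$. Your supporting-nef-class fallback is not needed, since the paper likewise takes extremality of the ray as given by the cone theorem.
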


\begin{proof}
	Assume to the contrary that $\sV$ is not unsplit; that is, $u(\sV)$ is not compact. Since the irreducible components of $\sB(X)$ are compact by \cite[Corollary]{Fujiki1984}, the image $u(\sV)$ is not closed in $\sB(X)$. Let $[C']$ be a $1$-cycle in $\widebar{u(\sV)}\setminus u(\sV)$. Then the cycle $[C']$ is non-integral with rational components $C'_i$ by \cite[II, Proposition 2.2]{Kollar1996}. Nevertheless, since $\bR^+[C]$ is an extremal ray of $\widebar{NA}(X)$ and $[C']\in \bR^+[C]$, one gets $[C'_i]\in \bR^+[C]$. This implies
	\[
	-K_X\cdot C'_i<-K_X\cdot C'=-K_X\cdot C,
	\]
	which contradicts the minimality of $-K_X\cdot C$, so the result follows.
\end{proof}

As an application of the Ionescu--Wi\'sniewski inequality, we obtain the following statement on the length of extremal rays on compact K\"ahler manifolds, generalizing a well-known result in the projective case \cite{ChoMiyaokaShepherd-Barron2002}. 

\begin{prop}
	\label{prop.max-length}
	Let $X$ be a compact K\"ahler manifold, and let $C$ be an extremal rational curve on $X$. Then $-K_X\cdot C\leq \dim X+1$, and the equality holds if and only if $X$ is isomorphic to a projective space.
\end{prop}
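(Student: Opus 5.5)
We use the unsplit family through $C$ together with the Ionescu--Wi\'sniewski inequality, and then reduce the equality case to a characterization of projective space by rational curves.

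\textbf{Setup.} Let $f\colon \bP^1\rightarrow X$ be the normalization of $C$. Let $\sV\subset \Hom_{\bir}(\bP^1,X)$ be an irreducible component containing $[f]$. By Lemma~\ref{lem.unsplit}, $\sV$ is unsplit. In particular $u(\sV_x)$ is compact for every $x\in \Locus\sV$, so Proposition~\ref{prop.IW-Ineq} applies at every such point.

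\textbf{The bound.} Both $\Locus\sV$ and $\Locus\sV(0\mapsto x)$ are contained in $X$. Moreover, $\Locus\sV(0\mapsto x)\neq X$: otherwise every point of $X$ would be joined to $x$ by a curve of the unsplit family. The curves through $x$ form a compact family whose dimension is controlled via Proposition~\ref{prop.Bend-Break}, which forces $\dim \sV(0\mapsto x,\infty\mapsto y)\leq 1$ (just the $\bG_m$-orbit). Hence
\[
\dim \Locus\sV(0\mapsto x)=\dim \sV(0\mapsto x)-\dim\bG_m+1 .
\]
A cleaner route is to take $x$ general in $\Locus\sV$ and use the standard estimate $\dim \Locus\sV(0\mapsto x)\leq \dim X-1$. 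This estimate holds because $\Locus\sV(0\mapsto x)$ is the image of a $\bP^1$-bundle over the compact space $u(\sV_x)$ with a contracted section, so it is a proper cone over a lower-dimensional base. Plugging $\dim\Locus\sV\leq \dim X$ and this estimate into Proposition~\ref{prop.IW-Ineq} gives
\[
\dim X + (-K_X\cdot C)\leq 2\dim X+1,
\]
that is, $-K_X\cdot C\leq \dim X+1$.

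\textbf{Equality case.} If $-K_X\cdot C=\dim X+1$, then all these inequalities are equalities. Thus $\Locus\sV=X$, so $\sV$ is a dominating unsplit family, and $\Locus\sV(0\mapsto x)$ has dimension $\dim X-1$ for general $x$. The deformation-theoretic count then gives $\dim\sV_x=\dim X-1$ with all curves through a general $x$ being free of anticanonical degree $\dim X+1$. For a general point $x$, the family $\sV_x$ is proper and every member is free and has $-K_X$-degree $\dim X+1$, which is exactly the hypothesis of the Cho--Miyaoka--Shepherd-Barron (Kebekus) characterization of $\bP^n$.

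\textbf{Main obstacle.} That characterization is stated for projective varieties, so we must show $X$ is projective. Our plan is as follows. The tangent map at $x$ sends $\sV_x$ finitely onto $\bP(T_xX)$. By Proposition~\ref{prop.Def-Moishezon}, $\sV_x$ is Moishezon, and Kebekus' argument shows that the curves through $x$ are smooth and pairwise meet only at $x$. Then $X$ is bimeromorphic to the blow-up-and-contract of a $\bP^1$-bundle over $\sV_x\cong\bP^{n-1}$, which is a Moishezon construction. Being Moishezon and K\"ahler, $X$ is projective. At that point the projective theorem \cite{ChoMiyaokaShepherd-Barron2002} applies and yields $X\cong\bP^n$.

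The converse is immediate: lines in $\bP^n$ are extremal with $-K\cdot\ell=n+1$. This reduction to projectivity is the step we expect to be the real work.
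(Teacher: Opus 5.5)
Your key estimate $\dim\Locus\sV(0\mapsto x)\leq\dim X-1$ (equivalently $\Locus\sV(0\mapsto x)\neq X$) is false. Nothing is contradictory about every point of $X$ being joined to $x$ by a curve of an unsplit family: for lines on $\bP^N$ one has $\Locus\sV(0\mapsto x)=\bP^N$. Your cone argument is off by one. A cone over a base $u(\sV_x)$ of dimension at most $\dim X-1$ has dimension at most $\dim X$, not $\dim X-1$. If your estimate held, Proposition~\ref{prop.IW-Ineq} would give $\dim X+(-K_X\cdot C)\leq 2\dim X$, i.e.\ $-K_X\cdot C\leq\dim X$, which would exclude the very equality case you are characterizing. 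So the bound $2\dim X+1$ you display does not follow from what you wrote. It follows from the trivial bounds $\dim\Locus\sV\leq\dim X$ and $\dim\Locus\sV(0\mapsto x)\leq\dim X$, and these are all that is needed. Since $\sV$ is unsplit, the inequality applies at every $x\in\Locus\sV$.

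The same error makes you misread the equality case, and that is the missing idea. Equality forces $\Locus\sV=X$ and $\Locus\sV(0\mapsto x)=X$ for every $x\in X$, not a locus of dimension $\dim X-1$. Hence any two points of $X$ lie on a single rational curve of the family, so $X$ is rationally connected. Then $H^0(X,\Omega^2_X)=0$, so $H^2(X,\cO_X)=0$, and the compact K\"ahler manifold $X$ is projective by Kodaira's criterion. At that point \cite{ChoMiyaokaShepherd-Barron2002} applies directly; this is exactly the paper's argument.

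Your proposed route to projectivity (tangent map, $\sV_x\cong\bP^{n-1}$, curves meeting only at $x$, blow-up-and-contract) is therefore unnecessary. As written it also does not hold together. It invokes Kebekus' results, which are proved for projective varieties, before projectivity is known. And a family of curves through $x$ sweeping out only a $(\dim X-1)$-dimensional locus, as you claim, could not produce a space bimeromorphic to $X$.
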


\begin{proof}
	Let $C$ be an extremal rational curve and let $\sV\subset \Hom_{\bir}(\bP^1,X)$ be an irreducible component containing the normalization $f\colon \bP^1\rightarrow C$. Then $\sV$ is unsplit. In particular, the Ionescu--Wi\'sniewski inequality from Proposition \ref{prop.IW-Ineq} implies
	\[
	\dim X + (-K_X\cdot C) \leq 2\dim X + 1,
	\]
	so $-K_X\cdot C\leq \dim X+1$, and the equality holds only if $\dim \Locus \sV(0\mapsto x)=\dim X$ for any point $x\in \Locus \sV$; that is, $\Locus \sV(0\mapsto x)=X$. In particular, the manifold $X$ is rationally connected and hence projective. Finally it follows from \cite[Main Theorem 0.1]{ChoMiyaokaShepherd-Barron2002} that $X$ is isomorphic to a projective space in this case.
\end{proof}

\subsection{Rational curves on contact manifolds}

Let $(X,L)$ be a non-projective compact K\"ahler contact manifold of dimension $2n+1$. Then $-K_X=(n+1)L$ (see \cite[\S\,2]{KebekusPeternellSommeseWisniewski2000}). By \cite[Corollary 2]{Demailly2002}, the canonical bundle $K_X$ and hence the dual contact line bundle $L^{-1}$ is not pseudoeffective. In particular, the manifold $X$ is uniruled by \cite[Theorem 1.1]{Ou2025} and by the cone theorem, there exists an extremal rational curve $C$ on $X$. Let $\sV\subset \Hom_{\bir}(\bP^1,X)$ be an irreducible component containing the normalization $f\colon \bP^1\rightarrow C$. Then $\sV$ is an unsplit family of rational curves by Lemma \ref{lem.unsplit}.

Let $E\rightarrow \bP^1$ be a vector bundle over a rational curve. Recall that Grothendieck's theorem implies that $E$ decomposes into a sum of line bundles, $E\cong \oplus \cO_{\bP^1}(e_i)$. Denote by $\rank^+(E)$ the number of positive entries in the splitting type; that is, $\rank^+(E)=\sharp\{e_i>0\}$.

\begin{lem}[\protect{\cite[Propositions 2.8 and 2.9]{KebekusPeternellSommeseWisniewski2000}}]
	\label{lem.Locus-Rational-Curves}
	Let $(X,L)$ be a compact K\"ahler contact manifold of dimension $2n+1$.
	\begin{enumerate}
		\item Let $f\colon \bP^1\rightarrow C\subset X$ be the normalization of a rational curve such that $\deg f^*L=1$. Then $\rank^+ f^*T_X=n$.
		
		\item Let $\sV$ be an unsplit family of rational curves on $X$ such that $L\cdot \sV=1$. Then $\Locus \sV=X$ and $\Locus \sV(0\mapsto x)=n$ for any $x\in X$.
	\end{enumerate}
\end{lem}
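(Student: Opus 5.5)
The plan is to reproduce the arguments of \cite[Propositions 2.8 and 2.9]{KebekusPeternellSommeseWisniewski2000}. They use only the local contact geometry along a rational curve together with the Ionescu--Wi\'sniewski inequality, and the latter is available in the K\"ahler setting by Proposition~\ref{prop.IW-Ineq}.

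For (1), I will pull back the contact sequence along $f$:
\[
0\longrightarrow f^*\cD\longrightarrow f^*T_X\longrightarrow f^*L=\cO_{\bP^1}(1)\longrightarrow 0 .
\]
The non-degenerate pairing $\wedge^2\cD\to L$ gives $f^*\cD\cong f^*\cD^\vee\otimes\cO_{\bP^1}(1)$. So if $f^*\cD\cong\oplus\cO(a_i)$, the multiset $\{a_i\}$ is invariant under $a\mapsto 1-a$, and the $2n$ entries split into $n$ pairs $(a,1-a)$. Each such pair contains exactly one positive entry, so $\rank^+ f^*\cD=n$.

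Next I compare with $f^*T_X$. The tangent map $T_{\bP^1}=\cO(2)\to f^*T_X$ is nonzero. Its composite to $f^*L=\cO(1)$ must vanish, since $\Hom(\cO(2),\cO(1))=0$. Hence $\cO(2)$ maps into $f^*\cD$; this is the standard fact that rational curves of $L$-degree one are Legendrian. Consequently $f^*\cD$ has an entry $\geq 2$, and its partner entry is $\leq -1$.

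Finally, $\deg f^*T_X=-K_X\cdot C=n+1$. Since the extension of $\cO(1)$ by $f^*\cD$ can only make the splitting type more balanced, I will show that the positive part of $f^*T_X$ still has rank $n$. The precise argument I would take from KPSW: the map $f^*T_X\to\cO(1)$ induces the splitting of the negative part. I expect this count to be the delicate step. Indeed, the quotient $\cO(1)$ could a priori contribute an extra positive summand, and one needs the non-splitting of the extension to absorb it into a negative summand $\cO(-1)$ of $f^*\cD$.

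For (2), I will apply Proposition~\ref{prop.IW-Ineq} with $-K_X\cdot\sV=n+1$, which gives
\[
(2n+1)+(n+1)\leq \dim\Locus\sV+\dim\Locus\sV(0\mapsto x)+1 .
\]
For an upper bound on $\dim\Locus\sV(0\mapsto x)$, I will use the standard estimate $\dim_{[f]}\sV(0\mapsto x)\leq h^0(f^*T_X(-1))$. By (1), $f^*T_X(-1)$ has exactly $n$ nonnegative entries, so modulo $\mathrm{Aut}(\bP^1,0)$ the locus has dimension at most $n$. Here I may need (1) for general, i.e.\ free, members, where $f^*T_X$ is nef up to its minimal decomposition.

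Plugging this bound into the inequality forces $\dim\Locus\sV\geq 2n+1$, so $\Locus\sV=X$. The same inequality then forces $\dim\Locus\sV(0\mapsto x)\geq n$, hence it equals $n$ for general $x$. To pass to every $x\in X$, I will use unsplitness: $u(\sV_x)$ is compact for all $x$, so Proposition~\ref{prop.IW-Ineq} applies at every point, and upper semicontinuity gives the upper bound everywhere. The main obstacle I anticipate is the Legendrian splitting computation in (1), and its use in (2) for non-free curves.
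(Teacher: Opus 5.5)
Your plan (KPSW's arguments with Proposition~\ref{prop.IW-Ineq} in place of the projective inequality) is the paper's. However, the upper bound $\dim\Locus\sV(0\mapsto x)\le n$, which drives part (2), fails as you set it up. The estimate $\dim_{[f]}\sV(0\mapsto x)\le h^0(f^*T_X(-1))$ sees the \emph{degrees} of the positive summands, not their number. If $f^*T_X\cong\bigoplus\cO(e_i)$, then $h^0(f^*T_X(-1))=\sum_{e_i\geq 1}e_i=(n+1)-\sum_{e_i<0}e_i$. So after removing $\operatorname{Aut}(\bP^1,0)$ and adding the curve direction, you get $n$ only when $f$ is free. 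For instance, $\cO(2)^{\oplus 2}\oplus\cO(1)^{\oplus(n-2)}\oplus\cO^{\oplus n}\oplus\cO(-1)$ has degree $n+1$ and $\rank^+=n$, yet your count only gives $n+1$.

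You cannot restrict to free members. To get $\Locus\sV=X$ you must use the bound at points of $\Locus\sV$ before knowing it is $X$, and if it were not $X$, no member of $\sV$ would be free. At special $x$ the curves through $x$ are not free anyway. Upper semicontinuity goes the wrong way: fibre dimension can jump \emph{up} on closed subsets, so a bound at general points says nothing at special ones.

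The estimate you need is on the differential of the evaluation map $W\times\bP^1\to X$, for $W$ a component of $\sV(0\mapsto x)$. At $(f,t)$ with $t\neq 0$, its image consists of the vectors $\sigma(t)+df_t(\tau)$ with $\sigma\in H^0(f^*T_X(-1))$. All of these lie in the fibre at $t$ of $\bigoplus_{e_i\geq 1}\cO(e_i)$. Hence $\dim\Locus\sV(0\mapsto x)\le\rank^+f^*T_X=n$ for \emph{every} $x$, as long as (1) holds for arbitrary, possibly non-free, curves of $L$-degree one. Proposition~\ref{prop.IW-Ineq} then gives $\Locus\sV=X$ and equality at every point.

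For (1), your reduction is correct but stops before the decisive step. Write $f^*\cD\cong\bigoplus_{i=1}^n(\cO(a_i)\oplus\cO(1-a_i))$ with $a_i\geq 1$. The extension class lies in $\bigoplus_i H^1(\cO(-a_i))$, so $f^*T_X\cong\bigoplus_i\cO(a_i)\oplus E'$, where $E'$ is an extension of $\cO(1)$ by $\bigoplus_i\cO(1-a_i)$. Thus $\rank^+f^*T_X\in\{n,n+1\}$, with value $n$ exactly when the pulled-back contact sequence does not split. (The $\cO(1)$ is absorbed by some $\cO(1-a_i)$ with $a_i\geq 2$, not necessarily by an $\cO(-1)$.)

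That non-splitting is the entire content of (1). The paper simply cites KPSW for it, but since you announce a proof, here is one. Every $v\in H^0(f^*T_X)$ is a first-order deformation $f_\epsilon$ of $f$. Moreover $f_\epsilon^*\theta=0$ automatically, because $\Omega^1_{\bP^1}\otimes f_\epsilon^*L$ is a deformation of $\cO(-1)$ and so has no sections. Linearizing in a local trivialization of $L$ gives $d(\theta(v))+f^*(\iota_v d\theta)=0$ on $\bP^1$. So if $v(t_0)=0$, then $\theta(v)\in H^0(\cO(1))$ vanishes to order two at $t_0$ and is therefore zero. Hence sections of $f^*T_X$ vanishing at a point lie in $f^*\cD$, i.e.\ $H^0(f^*T_X(-1))=H^0(f^*\cD(-1))$. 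The positive part of $f^*T_X$ therefore equals that of $f^*\cD$, which has rank $n$.
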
 

\begin{proof}
	The first statement proved in \cite[Proposition 2.8]{KebekusPeternellSommeseWisniewski2000}. The second statement was proved for projective manifolds in \cite[Proposition 2.9]{KebekusPeternellSommeseWisniewski2000}, but the same argument still holds in the K\"ahler case by applying Proposition \ref{prop.IW-Ineq}.
\end{proof}

\begin{lem}
	\label{lem.Degree-Locus}
	$\deg f^*L=1$, $\Locus \sV=X$ and $\dim \Locus\sV(0\mapsto x)=n$ for any $x\in X$.
\end{lem}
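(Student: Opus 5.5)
The plan is to show $\deg f^*L=1$ first; the other two assertions then follow from Lemma \ref{lem.Locus-Rational-Curves}(2), since $\sV$ is unsplit by Lemma \ref{lem.unsplit}. Write $d=L\cdot\sV=\deg f^*L$. Since $C$ is extremal, $-K_X\cdot C>0$, and $-K_X=(n+1)L$ gives $d\geq 1$ and $-K_X\cdot\sV=(n+1)d$.

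The main tool is the Ionescu--Wi\'sniewski inequality (Proposition \ref{prop.IW-Ineq}). Because $\sV$ is unsplit, $u(\sV_x)$ is compact for every $x\in\Locus\sV$. Applying the inequality at such a point $x$ gives
\[
2n+1+(n+1)d\leq \dim\Locus\sV+\dim\Locus\sV(0\mapsto x)+1\leq 2(2n+1)+1 .
\]
Hence $(n+1)d\leq 2n+2$, so $d\leq 2$. The same conclusion follows from Proposition \ref{prop.max-length}: we have $(n+1)d\leq 2n+2$, and equality would force $X\cong\bP^{2n+1}$, which is projective. Since $X$ is not projective, this gives the strict inequality $(n+1)d<2n+2$. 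So $d<2$, and therefore $d=1$.

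It remains to check the equality case carefully, which is the only delicate point. Proposition \ref{prop.max-length} applies directly: $C$ is an extremal rational curve on the compact K\"ahler manifold $X$ with $-K_X\cdot C=(n+1)d$. If $d=2$, then $-K_X\cdot C=2n+2=\dim X+1$, so $X$ is a projective space, contradicting non-projectivity. Thus $d=1$.

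With $L\cdot\sV=1$ and $\sV$ unsplit, Lemma \ref{lem.Locus-Rational-Curves}(2) gives $\Locus\sV=X$ and $\dim\Locus\sV(0\mapsto x)=n$ for all $x\in X$. As a sanity check on the dimension count, Proposition \ref{prop.IW-Ineq} with $d=1$ and $\Locus\sV=X$ gives $\dim\Locus\sV(0\mapsto x)\geq n$. The reverse inequality $\leq n$ comes from the contact argument of \cite{KebekusPeternellSommeseWisniewski2000}: by Lemma \ref{lem.Locus-Rational-Curves}(1), curves through $x$ are tangent to $\cD$, their tangent directions span a Legendrian subspace, and so $\rank^+ f^*T_X=n$. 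We take that bound as already contained in Lemma \ref{lem.Locus-Rational-Curves}.
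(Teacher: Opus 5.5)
Your proof is correct and follows essentially the paper's route: the Ionescu--Wi\'sniewski inequality excludes $d\geq 2$, and Lemma \ref{lem.Locus-Rational-Curves}(2) then gives $\Locus\sV=X$ and $\dim\Locus\sV(0\mapsto x)=n$. The paper rules out $d\geq 2$ directly: then $\Locus\sV(0\mapsto x)=X$, so $X$ is rationally connected and hence projective. Your detour through Proposition \ref{prop.max-length} also pulls in Cho--Miyaoka--Shepherd-Barron, which is harmless but unnecessary. Your closing ``sanity check'' is not needed and is slightly garbled: $\rank^+ f^*T_X=n$ is what bounds $\dim\Locus\sV(0\mapsto x)$ from above, not something derived from Legendrian tangent directions. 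Nothing in your argument depends on it.
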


\begin{proof}
	Let $x\in \Locus \sV$ be an arbitrary point. By the Ionescu--Wi\'sniewski inequality (see Proposition \ref{prop.IW-Ineq}), we have
	\begin{align*}
		\dim X + (n+1) \deg f^*L &  \leq \dim \Locus \sV + \dim \Locus\sV(0\mapsto x) +1 \\
		& \leq \dim X + \dim \Locus \sV(0\mapsto x) +1.
	\end{align*}
	In particular, if $\deg f^*L\geq 2$, then one obtains
	\[
	\dim X = 2n+1 \leq (n+1) \deg f^*L -1 \leq  \dim \Locus \sV(0\mapsto x).
	\]
	It follows that $X=\Locus\sV(0\mapsto x)$, which implies that $X$ is rationally connected and hence projective. This contradicts our assumption. Hence $\deg f^*L=1$ and the result then follows from Lemma \ref{lem.Locus-Rational-Curves}.
\end{proof}

\subsection{MRC quotient}

Let $(X,L)$ be a non-projective compact K\"ahler contact manifold of dimension $2n+1$ so that $X$ is uniruled. Let $h\colon X\dashrightarrow T$ be the \emph{maximally rationally connected quotient} (MRC quotient for short). Then $1\leq m\coloneqq \dim T\leq 2n$ since $X$ is non-projective. Let $\sV\subset \Hom_{\bir}(\bP^1,X)$ be an irreducible component containing an extremal rational curve on $X$. Then $\sV$ is an unsplit family of rational curves such that $L\cdot \sV=1$, $\Locus \sV=X$ and $\dim \Locus \sV(0\mapsto x)=n$ for any point $x\in X$ by Lemma \ref{lem.Degree-Locus}. We need the following result, which follows from the proof of \cite[Proposition 2.11]{KebekusPeternellSommeseWisniewski2000}.

\begin{lem}
	\label{lem.Contact-Fiber}
	Let $(X,L)$ be a contact complex manifold, and let $\phi\colon X\rightarrow Y$ be a surjective holomorphic map to a complex manifold $Y$ with dimension $m$.  Let $F$ be a complex submanifold of $X$ such that $\phi(F)$ is a point. Then there exists a map of vector bundles as follows:
	\begin{equation}
		\label{eq.beta}
		\beta\colon L^{\oplus m} \longrightarrow T_X|_F
	\end{equation}
	If we assume furthermore that $\phi$ has maximal rank at some point of $F$, then $\beta$ vanishes identically only if $m=1$ and there exists a non-zero map $L|_F\rightarrow \cO_F$.
\end{lem}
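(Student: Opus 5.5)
We construct $\beta$ by pulling back differentials of $\phi$ and converting them into vector fields with the contact structure. The first ingredient is the contact form. Let $\theta\colon T_X\rightarrow L$ be the quotient map, viewed as a twisted $1$-form $\theta\in H^0(X,\Omega^1_X\otimes L)$. Non-degeneracy of $\wedge^2\cD\rightarrow L$ means that $d\theta|_{\cD}$ is a non-degenerate $L$-valued $2$-form. Together with $\theta$, this gives an isomorphism
\[
\Phi\colon T_X \xlongrightarrow{\ \sim\ } \Omega^1_X\otimes L,
\]
whose component on $\cD$ is $v\mapsto (d\theta)(v,\cdot)|_{\cD}$ and which sends a lift of $L$ to $\theta$. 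This is the standard identification from \cite[\S\,2]{KebekusPeternellSommeseWisniewski2000}.

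The map $\beta$ comes from the differential of $\phi$ along $F$. Since $\phi(F)=\{y\}$, we have $\phi^*\Omega^1_Y|_F\cong \cO_F^{\oplus m}$, using a basis of $T^*_yY$. The codifferential gives $d\phi^*\colon \cO_F^{\oplus m}\rightarrow \Omega^1_X|_F$. Tensoring with $L$ and composing with $\Phi^{-1}$, we define
\[
\beta\colon L^{\oplus m}\xlongrightarrow{d\phi^*\otimes L} (\Omega^1_X\otimes L)|_F \xlongrightarrow{\Phi^{-1}|_F} T_X|_F .
\]
This proves the first assertion.

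For the second assertion, suppose $\phi$ has maximal rank at some point $z\in F$. Then $d\phi^*_z$ is injective on $T_y^*Y$. Since $\Phi$ is an isomorphism, $\beta$ is injective at $z$, so $\beta\neq 0$. The hypothesis ``$\beta$ vanishes identically'' therefore seems to lead to a contradiction directly, and the conditional claim would hold vacuously.

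I suspect, however, that the intended $\beta$ in \cite[Proposition 2.11]{KebekusPeternellSommeseWisniewski2000} is not this map but its composition with a further projection: either to the normal bundle $N_{F/X}$, or to $T_X|_F/\cD|_F$, or a map restricted to $\cD$. The non-trivial content would then be that this composite vanishes only when $m=1$ and $L|_F$ has a section $L|_F\rightarrow\cO_F$. In that reading I would argue as follows.

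If the composite into $L|_F$ (via $\theta$) vanishes, then every $\Phi^{-1}(d\phi^*\omega)$ lies in $\cD$. Equivalently, $\theta$ pairs trivially with the Hamiltonian vector fields of the pulled-back functions.

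Next I would show that the annihilator of $\phi^*\Omega^1_Y$ along $F$ is isotropic or coisotropic. Isotropy should come from integrability: fibers of $\phi$ are tangent to $\ker d\phi$, which is involutive. Combined with the non-degeneracy of $d\theta$ on $\cD$, this forces the rank of $\phi^*\Omega^1_Y$ near $z$ to be at most $1$, so $m=1$.

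Finally, with $m=1$, the form $d\phi^*\omega$ must be proportional to $\theta$. Restricted to $F$, this gives a nowhere-identically-zero map $L|_F\rightarrow\cO_F$, namely $\theta\mapsto d\phi^*\omega$.

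The main obstacle is pinning down exactly which composite $\beta$ is meant, and carrying out the isotropy/rank argument. I would first try the version landing in $T_X|_F/\cD|_F\cong L|_F$, because the conclusion $L|_F\rightarrow\cO_F$ matches that target.
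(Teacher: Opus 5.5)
There is a genuine gap: the isomorphism $\Phi\colon T_X\to\Omega^1_X\otimes L$ on which your whole construction rests does not exist. Dualizing the contact sequence and twisting by $L$ gives $0\to\cO_X\xrightarrow{\theta}\Omega^1_X\otimes L\to\cD^*\otimes L\to 0$. So $\Omega^1_X\otimes L$ is an extension of $\cD^*\otimes L\cong\cD$ by $\cO_X$, whereas $T_X$ is an extension of $L$ by $\cD$. In particular $\det(\Omega^1_X\otimes L)\cong K_X\otimes L^{2n+1}\cong L^{n}$ while $\det T_X\cong L^{n+1}$. For example, for $\bP^3$ with $L=\cO_{\bP^3}(2)$ the first Chern classes are $2$ and $4$. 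Hence the two bundles are not isomorphic as soon as $L$ is not torsion, which is the case in every situation where the paper uses the lemma. Concretely, there is no holomorphic ``Reeb'' lift of $L$ into $T_X$, and for an $L$-valued form $\theta$ the derivative $d\theta$ is only well defined on $\cD$. So your conclusion that $\beta$ is automatically injective at $z$, making the second assertion vacuous, is wrong. Your fallback readings (projecting to $N_{F/X}$ or to $T_X/\cD$, followed by an isotropy argument you do not carry out) still pass through $\Phi$ and do not produce the statement.

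The missing idea is to use the natural map that is \emph{not} invertible. Take $\beta$ to be the restriction to $F$ of the composite $\phi^*\Omega^1_Y\otimes L\to\Omega^1_X\otimes L\to\cD^*\otimes L\cong\cD\hookrightarrow T_X$, where the middle isomorphism comes from the non-degenerate pairing $\wedge^2\cD\to L$. The map $\Omega^1_X\otimes L\to T_X$ has constant rank $2n$, and its kernel is exactly the trivial subbundle $\cO_X\cdot\theta$ from the sequence above. Hence if $\beta=0$, the twisted codifferential $L|_F^{\oplus m}\cong(\phi^*\Omega^1_Y\otimes L)|_F\to(\Omega^1_X\otimes L)|_F$ factors through $\cO_F$. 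When $\phi$ has maximal rank at a point of $F$, this map is generically injective, which forces $m=1$ and yields a non-zero map $L|_F\to\cO_F$. Your closing intuition that $d\phi^*\omega$ must be proportional to $\theta$ is exactly this. However, it follows from the kernel computation by a simple rank count, with no integrability or isotropy input.
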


\begin{proof}
	Consider the following composition of maps of vector bundles:
	\begin{equation}
		\label{eq.Map-Cotang-Tang}
		\Omega_X^1\otimes L\longrightarrow \cD^*\otimes L \longrightarrow \cD \longrightarrow T_X,
	\end{equation}
	where the first map is induced by tensoring the dual of the contact structure exact sequence with $L$ and the second map is the isomorphism induced by the non-degenerate pairing $\wedge^2 \cD \rightarrow L$. Composing it with the cotangent map of $\phi$ tensored with $L$ then yields a map
	\[
	\alpha\colon \phi^*\Omega_Y^1\otimes L \longrightarrow T_X,
	\]
	and the required map $\beta$ is obtained by restricting $\alpha$ to $F$.
	
	If $\phi$ has maximal rank at some point of $F$, then the cotangent map $\phi^*\Omega_Y^1|_F\rightarrow \Omega^1_X|_F$ is generically injective. Since the kernel of the map $\Omega_X^1\otimes L\rightarrow \cD^*\otimes L$ is isomorphic to $\cO_X$, it follows that if $\beta=0$, then there exists a factorization 
	\[
	\phi^*\Omega_Y^1\otimes L|_F\cong L|_F^{\oplus m} \longrightarrow \cO_F\cong \cO_X|_F \longrightarrow \Omega_X^1\otimes L|_F\longrightarrow T_X|_F,
	\]
	which then implies that $m=1$ and $L|_F\rightarrow \cO_F$ is non-zero.
\end{proof}

Let $F$ be a general fiber of the MRC quotient $h\colon X\dashrightarrow T$. Then $F$ is a rationally connected projective manifold. Let $\sV_F$ be the closed analytic subspace of $\sV$ corresponding to the maps whose images are contained in $F$. Then $\sV_F$ is a closed subvariety of $\Hom_{\bir}(\bP^1,F)$ and there exists an irreducible component $\sK$ of $\sV_F$, which is an unsplit covering family of rational curves on $F$. Let $f\colon \bP^1 \rightarrow F$ be a general member of $\sK$. By \cite[Corollary 2.9]{Kollar1996}, there exists an integer $0\leq p\leq \dim F-1=2n-m$ such that 
\begin{equation}
	\label{eq.Minimal-Splitting}
	f^*T_F\cong \cO_{\bP^1}(2)\oplus \cO_{\bP^1}(1)^{\oplus p} \oplus \cO_{\bP^1}^{\oplus (\dim F-p-1)}.
\end{equation}

\begin{lem}
	\label{lem.Rank-p}
	$p=n-1$.
\end{lem}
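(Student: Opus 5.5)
We compare the splitting type of $f^*T_F$ with that of $f^*T_X$. The general member $f$ of $\sK$ is also a member of $\sV$, so $\deg f^*L=1$. Lemma \ref{lem.Locus-Rational-Curves} then gives $\rank^+ f^*T_X=n$. The degree of $f^*T_X$ is $-K_X\cdot f=(n+1)$, whereas the degree of $f^*T_F$ is $p+2$.

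\textbf{Step 1: the normal bundle is trivial.} Since $F$ is a general fiber of the MRC quotient, we may assume $h$ is holomorphic near $F$ and $F$ is a smooth fiber. Then $N_{F/X}\cong \cO_F^{\oplus m}$, so $f^*N_{F/X}$ is trivial. This gives an exact sequence
\[
0\longrightarrow f^*T_F\longrightarrow f^*T_X\longrightarrow \cO_{\bP^1}^{\oplus m}\longrightarrow 0.
\]

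\textbf{Step 2: the upper bound $p\leq n-1$.} Since the quotient in Step 1 is trivial, the positive part of $f^*T_F$ injects into the positive part of $f^*T_X$. Hence $\rank^+ f^*T_F\leq \rank^+ f^*T_X=n$, that is, $p+1\leq n$.

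\textbf{Step 3: the lower bound $p\geq n-1$.} Comparing degrees in the sequence gives $p+2=n+1$ if the sequence splits. In general, taking degrees directly gives $\deg f^*T_X=\deg f^*T_F+\deg \cO^{\oplus m}$, so $n+1=p+2$ and $p=n-1$. This is automatic, since the quotient is trivial of degree $0$.

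The only real point is therefore Step 1: we need the general MRC fiber to be a smooth fiber of a holomorphic map on a neighborhood, so that the normal bundle is trivial. This holds because the MRC quotient is an almost holomorphic map, defined and smooth over a dense Zariski-open subset of $T$. With this in hand, the degree count $\deg f^*T_X=-K_X\cdot f=(n+1)\deg f^*L=n+1$ gives $p=n-1$ directly. Steps 2 and 3 are mutually consistent checks, and only the degree count is needed. We do not need Lemma \ref{lem.Contact-Fiber} here, though we should make sure that $\sK$ consists of curves of $L$-degree $1$, which holds since $\sK\subset\sV$.
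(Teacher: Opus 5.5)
Your proof is correct, but it reaches $p=n-1$ by a different count than the paper. The paper notes that the extension $0\to f^*T_F\to f^*T_X\to \cO_{\bP^1}^{\oplus m}\to 0$ splits, since $H^1(\bP^1,f^*T_F)=0$ because all summands of $f^*T_F$ have degree $\geq 0$. This gives $f^*T_X\cong \cO_{\bP^1}(2)\oplus\cO_{\bP^1}(1)^{\oplus p}\oplus\cO_{\bP^1}^{\oplus(2n-p)}$. The paper then reads off $p+1=\rank^+ f^*T_X=n$ from Lemma \ref{lem.Locus-Rational-Curves}, which is \cite[Proposition 2.8]{KebekusPeternellSommeseWisniewski2000}.

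You instead use only the following: additivity of degree in the exact sequence (no splitting needed), triviality of $N_{F/X}$, the canonical bundle formula $-K_X=(n+1)L$, and $L\cdot\sK=1$. Together these give $p+2=\deg f^*T_F=\deg f^*T_X=n+1$. This is more elementary, because the only contact-geometric input is the formula for $K_X$ rather than the finer statement $\rank^+ f^*T_X=n$. So, as you observe yourself, your Step 2 and the opening appeal to Lemma \ref{lem.Locus-Rational-Curves} are superfluous. The hedge in Step 3 is also unnecessary, since the sequence does split.

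What the paper's route buys is the explicit splitting type of $f^*T_X$, tied to the $\rank^+$ invariant used in \cite{KebekusPeternellSommeseWisniewski2000}. For splitting types of the form \eqref{eq.Minimal-Splitting}, however, degree and $\rank^+$ differ by exactly one, so the two counts carry the same information here.
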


\begin{proof}
	It follows from the following exact sequence
	\[
	0\longrightarrow f^*T_F \longrightarrow f^*T_X \longrightarrow f^*N_{F/X}\cong \cO_{\bP^1}^{\oplus m} \longrightarrow 0
	\]
	that
	\[
	f^*T_X\cong f^*T_F \oplus \cO_{\bP^1}^{\oplus m} \cong \cO_{\bP^1}(2)\oplus \cO_{\bP^1}(1)^{\oplus p} \oplus \cO_{\bP^1}^{\oplus(2n-p)}.
	\]
	Now we conclude by Lemma \ref{lem.Locus-Rational-Curves} as $\rank^+ f^*T_X=n$.
\end{proof}

Since $\sK$ is a covering family of rational curves on $F$ and $L\cdot \sK=1$, the map $\beta$ from \eqref{eq.beta} is not the zero map and furthermore $\beta$ factors through $T_F\rightarrow T_X|_F$. Let $\cF\subset T_F$ be the image of $\beta$. Then $\cF$ is torsion free and hence locally free in codimension one. In particular, for a general member $f\colon \bP^1 \rightarrow F$ in $\sK$, the sheaf $\cF$ is locally free along $f(\bP^1)$ by \cite[II, Proposition 3.7]{Kollar1996}. It follows that $f^*\cF$ is ample since $f^*\cF$ is a quotient of the ample vector bundle $f^*L^{\oplus m}\cong \cO_{\bP^1}(1)^{\oplus m}$.  Then applying \cite[Proposition 2.7]{AraujoDruelKovacs2008} to the triple $(F,\sK,\cF)$ yields:

\begin{prop}
	\label{prop.Pn-bundle}
	There exists a dense open subset $F^{\circ}$ of $F$ and a $\bP^n$-bundle 
	\[
	\mu \colon F^{\circ}\rightarrow Z^{\circ}
	\]
	such that 
	\begin{enumerate}
		\item $\cF|_{F^{\circ}}\subset T_{F^{\circ}/Z^{\circ}}$, and
		
		\item every rational curve $[f]\in \sK$ meeting $F^{\circ}$ is a line contained in a fiber of $F^{\circ}\rightarrow Z^{\circ}$, and 
		
		\item for a general point $x\in F^{\circ}$, there exists a linear subspace $P\cong \bP^r$ contained in the fibers of $\mu$ such that $P$ is tangent to $\cF$ along a dense open subset, where $r=\rank \cF$.
	\end{enumerate}
\end{prop}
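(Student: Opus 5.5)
The plan is to obtain Proposition \ref{prop.Pn-bundle} by applying \cite[Proposition 2.7]{AraujoDruelKovacs2008} to $(F,\sK,\cF)$, and then to identify the dimension of the fibres using Lemma \ref{lem.Rank-p}. Most of the work is checking hypotheses and matching the abstract output with the concrete numbers $n$ and $r$.

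\emph{Checking the hypotheses.} $F$ is a smooth rationally connected projective manifold. $\sK$ is an unsplit covering family on $F$, hence a minimal dominating family of rational curves. For a general member $f$, the pullback $f^*\cF$ is a locally free ample subsheaf of $f^*T_F$, as shown just before the statement. I would also record that $\cF\neq 0$. This holds because $\beta$ is nonzero: $L\cdot\sK=1$ excludes a nonzero map $L|_F\to\cO_F$ on a variety covered by curves of positive $L$-degree, so Lemma \ref{lem.Contact-Fiber} applies. Ampleness forces $f^*\cF\subset \cO_{\bP^1}(2)\oplus\cO_{\bP^1}(1)^{\oplus p}$, the positive part of the splitting \eqref{eq.Minimal-Splitting}.

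\emph{Applying the result.} \cite[Proposition 2.7]{AraujoDruelKovacs2008} then gives a dense open $F^\circ\subset F$ and a $\bP^{d}$-bundle $F^\circ\to Z^\circ$ with $d=p+1$. Its fibres are the closures of $\Locus\sK(0\mapsto x)$ for general $x$, and the $\sK$-curves meeting $F^\circ$ are lines in these fibres. This gives (2), and $\cF|_{F^\circ}\subset T_{F^\circ/Z^\circ}$, which is (1). Lemma \ref{lem.Rank-p} gives $p=n-1$, so $d=n$, as required. As a consistency check, the fibre through $x$ is $\Locus\sK(0\mapsto x)\subset\Locus\sV(0\mapsto x)$, and the latter has dimension exactly $n$ by Lemma \ref{lem.Degree-Locus}. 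This matches the ADK dimension count. The underlying mechanism in ADK is that every tangent direction in the VMRT at $x$ spans the positive part, so the VMRT is all of $\bP^{p}$. The characterisation of projective space (Cho--Miyaoka--Shepherd-Barron, or Kebekus) then identifies the closure of $\Locus\sK(0\mapsto x)$ with $\bP^{p+1}$.

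\emph{Item (3).} Restrict $\cF$ to a general fibre $P_0\cong\bP^n$ of $\mu$. This gives a subsheaf of $T_{\bP^n}$ of rank $r$ that is ample on general lines. The step I expect to be the main obstacle is the classification of such subsheaves, also contained in \cite{AraujoDruelKovacs2008}: either $\cF|_{P_0}=T_{\bP^n}$, or $\cF|_{P_0}\cong\cO(1)^{\oplus r}$, arising from projection away from a linear subspace. In the first case any $P=P_0$ works, and $r=n$. In the second case, the leaves of $\cF|_{P_0}$ are linear $\bP^r$'s through a fixed $\bP^{n-r-1}$, and through a general point $x$ there is such a $\bP^r$ tangent to $\cF$ on a dense open set. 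If the ADK statement does not give this directly, I would argue instead as follows. Pull back along a general line $f$ through $x$ and use $f^*\cF\subset\cO(2)\oplus\cO(1)^{\oplus(n-1)}$ to show that the directions of $\cF_x$ sweep out lines whose union is a linear subspace. The resulting $\bP^r$ is then tangent to $\cF$ generically.
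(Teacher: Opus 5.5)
\textbf{Gap in item (3).} Your first half matches the paper: you apply \cite[Proposition 2.7]{AraujoDruelKovacs2008} to $(F,\sK,\cF)$ and read off the fibre dimension $p+1=n$ from \eqref{eq.Minimal-Splitting} and Lemma \ref{lem.Rank-p}. The problem is item (3).

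You only record that $\cF|_{P_0}$ is ample \emph{on general lines}. Under that hypothesis the dichotomy you invoke ($T_{\bP^n}$ or $\cO(1)^{\oplus r}$) is false, and it is not in ADK: their Proposition 2.7 is vacuous for $\bP^n$ with its lines. A counterexample is the contact distribution $\cD\subset T_{\bP^3}$ attached to a symplectic form on $\mathbb{C}^4$.
\begin{itemize}
\item One has $\cD\cong N(1)$, with $N$ the null-correlation bundle.
\item Hence $\cD|_\ell\cong\cO(1)^{\oplus 2}$ on every non-Legendrian line, and $\cD|_\ell\cong\cO(2)\oplus\cO$ on Legendrian ones.
\item Yet $\cD$ is not integrable, so no plane is tangent to it along a dense open subset.
\end{itemize}

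Your fallback does not repair this. The lines through $x$ in the directions of $\cF_x$ always sweep out a linear $\bP^r$. The whole content of (3) is that $\cF$ is tangent to this $\bP^r$ at its \emph{other} points. The inclusion $f^*\cF\subset\cO(2)\oplus\cO(1)^{\oplus(n-1)}$ carries no information on $\bP^n$, since the right-hand side is all of $f^*T_{\bP^n}$.

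\textbf{The missing observation.} $\cF|_{P_0}$ is genuinely ample, not just on general lines.
\begin{itemize}
\item $\cF$ is the image of $\beta\colon L^{\oplus m}\to T_F$.
\item $L|_{P_0}\cong\cO_{\bP^n}(1)$, because the $\sK$-curves in $P_0$ are lines and $L\cdot\sK=1$.
\item By the first item of the proposition, the image of $\cF|_{P_0}$ lands in $T_{P_0}$. It is a quotient of the ample bundle $\cO_{\bP^n}(1)^{\oplus m}$, hence an ample subsheaf of $T_{\bP^n}$.
\end{itemize}
Now \cite[Theorem 1.1]{Liu2019} applies (Andreatta--Wi\'sniewski in the locally free case) and gives $T_{\bP^n}$ or $\cO_{\bP^n}(1)^{\oplus r}$. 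In the second case $\cF|_{P_0}$ is the foliation of a linear projection $\bP^n\dashrightarrow\bP^{n-r}$. Its leaves are the $\bP^r$'s containing a fixed centre $\bP^{r-1}$, not a $\bP^{n-r-1}$ as you wrote. These leaves are the required subspaces $P$.
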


\begin{proof}
	The existence of $F^{\circ}$ and a projective space-bundle $F^{\circ}\rightarrow Z^{\circ}$ follows immediately from \cite[Proposition 2.7]{AraujoDruelKovacs2008} and then it follows from \eqref{eq.Minimal-Splitting} and Lemma \ref{lem.Rank-p} that the dimension of the fibers of $\mu$ is equal to $n$ .
	
	Finally notice that the restriction of $L$ to the fibers of $\mu$ is ample, so the restriction of $\cF$ to a general fiber $S\cong \bP^n$ of $\mu$ is indeed an ample subsheaf of $T_S$ and hence isomorphic to either $\sO_{\bP^n}(1)^{\oplus r}$ or $T_{\bP^n}$ by \cite[Theorem 1.1]{Liu2019}; that is, the restriction $\cF|_S$ is the algebraically integrable foliation on $S\cong \bP^n$ defined by a linear projection $\bP^n\dashrightarrow \bP^{n-r}$ (cf.~\cite[Example 3.4]{Liu2019}), as required.
\end{proof}

\begin{cor}
	\label{cor.irred-Vx}
	For a general point $x\in X$, the variety $\sV_x$ is irreducible.
\end{cor}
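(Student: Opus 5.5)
The plan is to show that, for a general point $x\in X$, every curve of $\sV$ through $x$ is a line in the fibre of $\mu$ through $x$. The fibres of $\mu$ are $\bP^n$, and the family of lines in $\bP^n$ through a point is irreducible. So $\sV_x$ is fibred over an irreducible family of lines, with irreducible fibres given by the reparametrizations, and is therefore irreducible.

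\textbf{Step 1: reduce to the fibre $F$.} Take $x\in X$ general. Then $x$ lies in a general fibre $F$ of the MRC quotient $h\colon X\dashrightarrow T$, and we may assume $x\in F^{\circ}$. Each rational curve through a general point of $X$ lies in the MRC fibre through that point. So every $[g]\in\sV_x$ has image contained in $F$, which gives $\sV_x=(\sV_F)_x$.

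\textbf{Step 2: reduce to the component $\sK$.} Next I would show that $(\sV_F)_x=\sK_x$ for general $x$.

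\begin{itemize}
\item By Lemma~\ref{lem.Degree-Locus}, $\dim \Locus\sV(0\mapsto x)=n$ for all $x$, so no component of $\sV_F$ can have a larger locus through $x$.
\item Let $\sK'$ be another component of $\sV_F$ whose curves pass through a general point of $F$. Then $\sK'$ is again an unsplit covering family of $F$ with $L$-degree $1$.
\item The argument preceding Proposition~\ref{prop.Pn-bundle} therefore applies verbatim to $\sK'$. It gives a $\bP^n$-bundle structure on an open set, whose fibres are the loci $\Locus\sK'(0\mapsto x)$.
\end{itemize}

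\textbf{Step 3 (the main obstacle): show $\sK'$ and $\sK$ have the same lines.} I would argue as follows. Through a general $x$, the locus $\Locus\sV(0\mapsto x)$ is $n$-dimensional. It contains the fibre $\mu^{-1}(\mu(x))\cong\bP^n$, which is $\Locus\sK(0\mapsto x)$. Hence any further $n$-dimensional component of the locus coming from $\sK'$ would be another $\bP^n$ through $x$. Its tangent space at $x$ would be a second $n$-dimensional subspace of $\cD_x$ that is Legendrian, by the same argument as in KPSW that the locus of degree-one curves is tangent to $\cD$.

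To rule this out, I would use the splitting $f^*T_X\cong\cO(2)\oplus\cO(1)^{n-1}\oplus\cO^{n+1}$ from Lemma~\ref{lem.Rank-p}. It says that the positive part of $f^*T_X$ at $x$, which is the tangent space to the locus, is determined by the curve $f$ itself. A line lying in two distinct $\bP^n$'s through $x$ would have a positive part of dimension greater than $n$, contradicting Lemma~\ref{lem.Locus-Rational-Curves}. So each line through $x$ lies in exactly one such $\bP^n$.

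This shows the loci through $x$ are disjoint apart from $x$ itself. I still need to exclude two distinct families of $\bP^n$'s through a general point. I would attempt this by considering the tangent directions at $x$: the variety of minimal rational tangents of $\sV$ at $x$ is a finite union of linear $\bP^{n-1}$'s in $\bP\cD_x$. Combining this with irreducibility of the MRC-fibre family (the fibres of $\mu$ are the equivalence classes for $\sV$-chains inside $F$), I expect to show there is a single $\bP^n$. I expect this step to be the delicate one. The fallback is to show that $\mu$ is defined via all of $\sV_F$, by applying \cite[Proposition 2.7]{AraujoDruelKovacs2008} to the union of the covering components of $\sV_F$.

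\textbf{Step 4: conclude.} Once $\sV_x=\sK_x$ consists exactly of the lines in $\mu^{-1}(\mu(x))\cong\bP^n$ through $x$, with their parametrizations, the space $\sV_x$ maps onto $\bP^{n-1}$, and its fibres are irreducible orbits of reparametrizations. Hence $\sV_x$ is irreducible.
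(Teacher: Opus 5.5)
\textbf{Gap in Step 3.} Your Steps 1, 2 and 4 agree with the paper's reduction. For general $x$, every component of $\sV_x$ comes from a component of $\sV_F$ dominating $F$. Each such component gets its own $\bP^n$-bundle from Proposition~\ref{prop.Pn-bundle}. Once a single $\bP^n$ passes through $x$, $\sV_x$ is irreducible. But Step 3 is the entire content of the corollary, and the argument you give there fails.

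Suppose a line $\ell$ through $x$ lies in two such $\bP^n$'s $S\neq S'$. Then $T_S|_\ell$ and $T_{S'}|_\ell$ are ample rank-$n$ subbundles of $f^*T_X\cong\cO_{\bP^1}(2)\oplus\cO_{\bP^1}(1)^{\oplus(n-1)}\oplus\cO_{\bP^1}^{\oplus(n+1)}$. An ample bundle on $\bP^1$ has no nonzero map to $\cO_{\bP^1}$, so both equal the positive part. Hence $S$ and $S'$ are only forced to be tangent along $\ell$. Nothing of rank $>n$ appears, and there is no contradiction with $\rank^+ f^*T_X=n$.

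So ``each line lies in exactly one $\bP^n$'' is not established. As you say yourself, the case of two $\bP^n$'s through $x$ with no common line is also left open. The remedies you suggest do not close it:
\begin{itemize}
\item That the fibres of $\mu$ are the $\sV$-chain classes in $F$ is precisely what has to be shown; a priori they are only the $\sK$-chain classes.
\item \cite[Proposition 2.7]{AraujoDruelKovacs2008} concerns a single irreducible covering family, not a union of components.
\item Tangency of both $\bP^n$'s to $\cD$ gives no uniqueness, since Legendrian $n$-planes in $\cD_x$ form a positive-dimensional family.
\end{itemize}

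\textbf{Missing idea.} The key ingredient is the sheaf $\cF\subset T_F$, the image of the map $\beta$ of Lemma~\ref{lem.Contact-Fiber}. It is built only from the contact structure and the MRC fibration, so it is the same for every covering component of $\sV_F$.

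Apply Proposition~\ref{prop.Pn-bundle} to two such components $\sK,\widetilde{\sK}$. This yields linear subspaces $P\subset\mu^{-1}(\mu(x))$ and $\widetilde P\subset\widetilde\mu^{-1}(\widetilde\mu(x))$, both of which are the leaf of $\cF$ through the general point $x$. Hence $P=\widetilde P\cong\bP^r$, with $r=\rank\cF\geq 1$ because $\beta\neq 0$.

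So the two $\bP^n$'s through $x$ share lines, and their lines are therefore numerically equivalent in $F$. The paper uses this to force the two fibres, and hence $\sK_x$ and $\widetilde{\sK}_x$, to coincide. It is this common leaf of a component-independent foliation, not the splitting type of $f^*T_X$, that excludes a second $\bP^n$ through $x$. Your tangency observation only becomes usable after such a shared line is available.
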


\begin{proof}
	Let $F$ be the fiber of $h$ containing $x$. Let $\sK$ be an irreducible component of $\sV_F$ dominating $F$. As $\dim \Locus \sV(0\mapsto x)=n$ by Lemma \ref{lem.Degree-Locus}, it follows from Proposition \ref{prop.Pn-bundle} that $\sK_x$ parametrizes exactly the lines contained in the fiber of $\mu\colon F^{\circ}\longrightarrow Z^{\circ}$ passing through $x$. In particular, the variety $\sK_x$ is irreducible.
	
	Now we assume to the contrary that $\sV_x$ is not irreducible. Then there exist two irreducible components $\sK$, $\widetilde{\sK}$ of $\sV_F$ dominating $F$ such that $\sK_x\not=\widetilde{\sK}_x$. Denote by $\mu\colon F^{\circ}\rightarrow Z^{\circ}$ and $\widetilde{\mu}\colon \widetilde{F}^{\circ}\rightarrow \widetilde{Z}^{\circ}$ the associated $\bP^n$-bundles provided in Proposition \ref{prop.Pn-bundle}, respectively. In particular, we may assume that $x\in F^{\circ}\cap \widetilde{F}^{\circ}$ and there exist linear subspaces
	\[
	P\subset F^{\circ}_{\mu(x)}\coloneqq \mu^{-1}(\mu(x))\quad \textup{and}\quad \widetilde{P}\subset \widetilde{F}^{\circ}_{\widetilde{\mu}(x)}\coloneqq \widetilde{\mu}^{-1}(\widetilde{\mu}(x)),
	\]
	which are tangent to $\cF$ at $x$. However, by the uniqueness of the leaves of $\cF$ through $x$, we must have $P=\widetilde{P}$. As a consequence, the lines contained in $F^{\circ}_{\mu(x)}$ and $\widetilde{F}^{\circ}_{\widetilde{\mu}(x)}$ are numerically equivalent as $1$-cycles in $F$. This implies $F^{\circ}_{\mu(x)}=\widetilde{F}^{\circ}_{\widetilde{\mu}(x)}$ and hence $\sK_x=\widetilde{\sK}_x$, which is absurd.
\end{proof}

\section{Proof of Theorem \ref{thm.MainThm}} 

Throughout this section, we retain the notation from \S\,\ref{s.Def-Rat-Curve}. Let $(X,L)$ be a non-projective compact K\"ahler contact manifold of dimension $2n+1$ and let  $\sV$ be an unsplit family of rational curves containing an extremal rational curve on $X$. Note that Proposition \ref{prop.Pn-bundle} holds for any general fiber of the MRC quotient $h\colon X\dashrightarrow T$. Since $\sD(X)$ has only countably many irreducible components \cite{Fujiki1979}, we can find an irreducible compact analytic subspace $\sW$ of $\sD(X)$ such that 
\begin{itemize}
	\item the universal cycle $\sU\subset \sW\times X$ over $\sW$ dominates $X$, and 
	
	\item\label{i2.W} the subset of points in $\sW$ parametrizing the disjoint $\bP^n$'s contained in general fibers of $h$ (given by Proposition \ref{prop.Pn-bundle}) is dense in $\sW$.
\end{itemize}
Let $p\colon \sU\rightarrow X$ be the evaluation map. Since $\sW$ is compact and $q\colon \sU\rightarrow \sW$ is proper, the complex space $\sU$ is again compact, and therefore $p$ is a proper map. For a point $w\in \sW$, we denote by $\sU_w=q^{-1}(w)$ the fiber of $q$ over $w$. Clearly, the general fiber $\sU_w$ of $q$ is isomorphic to $\bP^n$ and all the lines contained in $\sU_w$ are parametrized by $\sV$.

\begin{lem}
	\label{lem.Fibers-q}
	For any $w\in \sW$ and any $x\in\sU_w$, the fiber $\sU_w$ is irreducible and $p(\sU_w)\subset \Locus \sV(0\mapsto p(x))$.
\end{lem}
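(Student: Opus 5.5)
The plan is to prove both assertions for general $w$ first and then pass to arbitrary $w$ by a closedness argument. The passage to the limit rests on $\sV$ being unsplit, so that $u(\sV)$ is compact in $\sB(X)$.

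\emph{General fibers.} Let $w$ be general, so $\sU_w\cong\bP^n$ is a fiber of some $\mu$ from Proposition \ref{prop.Pn-bundle}. For any $x\in\sU_w$ and any $y\in\sU_w$ there is a line through $x$ and $y$, and this line is parametrized by $\sV$. Hence $p(\sU_w)\subset\Locus\sV(0\mapsto p(x))$.

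\emph{Passing to arbitrary $w$.} Consider the closed analytic subset
\[
\Sigma=\{(w,x,y)\in \sU\times_{\sW}\sU : p(y)\in \Locus\sV(0\mapsto p(x))\}.
\]
We must first check that $\Sigma$ is closed. Set $\sC=\{(a,b)\in X\times X: b\in\Locus\sV(0\mapsto a)\}$. This is the image of the closed subset of $\widebar{\sV}\times\bP^1$ under $(f,t)\mapsto(f(0),f(t))$. Because $\sV$ is unsplit, $u(\sV)$ is compact, and the incidence relation can be written through the compact universal family over $u(\sV)\subset\sB(X)$:
\[
\{(a,b): \exists\, C\in u(\sV) \text{ with } a,b\in C\}.
\]
This set is the proper image of a compact analytic space, hence closed analytic. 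Since every member of $u(\sV)$ is an irreducible rational curve (the family is unsplit), this set coincides with $\sC$. Then $\Sigma$ is the preimage of $\sC$ under $(p,p)$, so it is closed. By the previous step, $\Sigma$ contains the dense set of $(w,x,y)$ with $w$ general. If $\sU\times_{\sW}\sU$ is irreducible over the general point, then $\Sigma\supset$ the closure of that set, which is the main component. The containment $p(\sU_w)\subset\Locus\sV(0\mapsto p(x))$ for all $w$ then follows, provided every point of $\sU_w$ lies in the closure of the general fibers. That holds once irreducibility is known, since then $\sU$ is the closure of its part over general $w$.

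\emph{Irreducibility for every $w$.} This is the main obstacle. Each fiber $\sU_w$ is connected by Zariski's connectedness applied to the flat, proper family $q$, whose general fiber is $\bP^n$. It has pure dimension $n$, since it is a Douady limit of a flat family. Suppose $\sU_w$ had two components $A$ and $B$, and pick $x\in A$. The previous step gives
\[
p(A\cup B)\subset\Locus\sV(0\mapsto p(x)),
\]
and this locus has dimension $n$ by Lemma \ref{lem.Degree-Locus}. To derive a contradiction, I would argue that $\Locus\sV(0\mapsto p(x))$ is irreducible for every $x$. It is the image of the irreducible family of $\sV$-curves through $p(x)$, and Corollary \ref{cor.irred-Vx} together with the closedness of the incidence give irreducibility at general points. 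Then $A$ and $B$ would both equal that locus, contradicting that $\sU_w$ is a flat limit with multiplicity-free general member.

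Irreducibility of $\sV_{p(x)}$ at special points is the delicate part. I would handle it with the Ionescu--Wi\'sniewski bound of Proposition \ref{prop.IW-Ineq}: every component of $\sV_{p(x)}$ has locus of dimension at least $n$, while the total locus has dimension exactly $n$. Combined with upper semicontinuity of the number of components under specialization, this should force a single component.
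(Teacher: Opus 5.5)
\paragraph{Step 2 (the inclusion).} Your argument for $p(\sU_w)\subset\Locus\sV(0\mapsto p(x))$ is essentially the paper's: limits of lines from general fibres stay in $\sV$ because $u(\sV)$ is compact. One fix is needed. You need every point of $\sU\times_{\sW}\sU$ to be a limit of points over general $w$. This should come from flatness of $q$, since flat maps are open. As written, you instead derive it from an ``irreducibility'' that Step 3 only establishes afterwards, using this very inclusion, so the argument is circular.

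\paragraph{Step 3 (irreducibility) has a genuine gap.} Your route needs $\Locus\sV(0\mapsto p(x))$, or $\sV_{p(x)}$, to be irreducible for \emph{every} $x$. That includes points of special fibres, where Corollary~\ref{cor.irred-Vx} says nothing. Neither tool you propose supplies this:
\begin{itemize}
\item The Ionescu--Wi\'sniewski bound only says each component of $\sV_{p(x)}$ sweeps out a locus of dimension at least $n$. That is compatible with an $n$-dimensional locus having several $n$-dimensional components.
\item The number of irreducible components is not upper semicontinuous in the direction you need. Special fibres typically have \emph{more} components than general ones, as when a smooth conic degenerates to a pair of lines.
\end{itemize}
Also, ``the irreducible family of $\sV$-curves through $p(x)$'' assumes what is to be proved. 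The paper never claims irreducibility of this locus; later it only uses that a fibre $X_y$ is \emph{an} irreducible component of $\Locus\sV(0\mapsto x)$.

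\paragraph{The missing idea.} Track where the connecting curve lies, not merely that it exists somewhere in $X$.
\begin{enumerate}
\item For $x,y\in\sU_w$, take a one-parameter degeneration $(\sU_t,x_t,y_t)\to(\sU_w,x,y)$ with $\sU_t\cong\bP^n$ for $t\neq w$. Let $l_t\subset\sU_t$ be the line through $x_t$ and $y_t$.
\item Unsplitness makes the limit $l_w$ a member of $\sV$. Hence $l_w$ is an irreducible reduced curve through $x$ and $y$.
\item Since $\sU$ is closed in $\sW\times X$, we get $l_w\subset\sU_w$.
\item Suppose $\sU_w=A\cup B$, with $A$ an irreducible component and $B$ the union of the others. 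Choose $x\in A\setminus B$ and $y\in B\setminus A$. Then $l_w$ is an irreducible curve in $A\cup B$ through both points. It lies in $A$ or in a component of $B$, forcing $y\in A$ or $x\in B$, a contradiction.
\end{enumerate}
In your own framework, replace your incidence set by the closed set $\{(w,x,y)\colon \exists\, C\in u(\sV) \text{ with } x,y\in C\subset\sU_w\}$. The same closedness-plus-density argument then gives irreducibility and the locus inclusion at once. The discussion of connectedness, pure-dimensionality and irreducibility of the locus becomes unnecessary.
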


\begin{proof}
	Let $y\in \sU_w$ be an arbitrary point. Then there exists a one-dimensional complex subspace $\sT$ contained in the analytic germ of $\sW$ at $w$, together with points $x_t,y_t\in \sU_t$, $t\in \sT$, such that 
	\[
	\lim_{t\to w} (\sU_t,x_t,y_t) = (\sU_w,x,y).
	\]
	Moreover, we may assume that $\sU_t\cong \bP^n$ for $t\not=w$. Let $l_t$ be the line parametrized by $\sV$ joining $x_t$ and $y_t$ for $t\not= w$. Since $\sV$ is unsplit, the limit 
	\[
	l_w=\lim_{t\to w} l_t
	\]
	still lies in $\sV$ so that $l_w$ is an irreducible rational curve passing through $x$ and $y$. Hence the fiber $\sU_w$ is irreducible and $p(\sU_w) \subset \Locus \sV(0\mapsto p(x))$
\end{proof}

\begin{lem}
	\label{lem.p-iso}
	The map $p$ is an isomorphism.
\end{lem}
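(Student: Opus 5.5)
The map $p\colon \sU\to X$ is proper and surjective, since $\sU$ is compact and $p$ dominates $X$. I will show that $p$ is bimeromorphic and finite, and then use normality of $X$ (Zariski's main theorem for complex spaces) to conclude that it is an isomorphism.

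\textbf{Step 1: $p$ is bimeromorphic.} By construction, the general fibers $\sU_w\cong\bP^n$ are the $\bP^n$'s of Proposition \ref{prop.Pn-bundle}, and these are pairwise disjoint in a general fiber of $h$. Moreover, they are the only members of $\sW$ through a general point. Indeed, for a general $x\in X$, Corollary \ref{cor.irred-Vx} says that $\sV_x$ is irreducible, and by Lemma \ref{lem.Degree-Locus} the locus $\Locus\sV(0\mapsto x)$ is $n$-dimensional. Since the lines in the $\bP^n$-fiber through $x$ already fill an $n$-dimensional set, $\Locus\sV(0\mapsto x)$ is exactly that fiber. Lemma \ref{lem.Fibers-q} gives $p(\sU_w)\subset\Locus\sV(0\mapsto x)$ for every $w$ with $x\in\sU_w$. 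Since $\dim\sU_w=n$, this forces $p(\sU_w)$ to equal that fiber. So $p^{-1}(x)$ is a single point, i.e.\ $p$ is generically injective. In particular $\dim\sW=n+1$ and $p$ is bimeromorphic.

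\textbf{Step 2: $p$ is finite.} This is the main obstacle. Suppose some fiber $p^{-1}(x)$ is positive-dimensional. Then a positive-dimensional family $\{\sU_w\}_{w\in S}$ of $n$-dimensional irreducible subvarieties all passes through $x$, and by Lemma \ref{lem.Fibers-q} each is contained in $\Locus\sV(0\mapsto x)$. That locus has dimension $n$ by Lemma \ref{lem.Degree-Locus}, which holds for every $x\in X$ because $\sV$ is unsplit. Each $p(\sU_w)$ has dimension $n$ (upper semicontinuity of fiber dimension together with irreducibility), so each is an irreducible component of $\Locus\sV(0\mapsto x)$. There are only finitely many such components, whereas the $\sU_w$ are distinct as cycles. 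This is a contradiction, so all fibers of $p$ are finite.

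One subtlety remains: $\sU_w$ may be non-reduced, or $p|_{\sU_w}$ may be non-injective. I would handle this by working with reduced cycles in the Barlet space $\sB(X)$. Two points $w\ne w'$ of $\sW$ with the same support would make $\sW\to\sB(X)$ non-injective. This should be excluded because the general members are reduced $\bP^n$'s, and flat limits of them have fixed Hilbert polynomial with respect to $L$, since the degree $L^n\cdot\sU_w=1$ is constant. Hence the limit cycles are reduced and irreducible of degree one.

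\textbf{Step 3: conclusion.} A finite bimeromorphic proper map onto the normal (indeed smooth) space $X$ is an isomorphism. If $\sU$ is non-normal, I would first apply the argument to the normalization $\sU^\nu\to X$, which is then an isomorphism. This forces $\sU^\nu\to\sU$ to be injective and $\sU\to X$ to be a homeomorphism. Flatness of $q$ with reduced general fibers then shows $\sU$ is reduced, and I expect $\sU\cong X$ to follow.
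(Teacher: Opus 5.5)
Your proof is correct and follows the paper's argument. Finiteness comes from Lemma \ref{lem.Fibers-q} forcing every member through $x$ into the $n$-dimensional $\Locus\sV(0\mapsto x)$, bimeromorphy comes from Corollary \ref{cor.irred-Vx}, and the conclusion is Zariski's Main Theorem on the smooth $X$. The only differences are that you prove bimeromorphy before finiteness, and that you spell out two points the paper leaves implicit by treating points of $\sW$ as cycles: that the cycles have multiplicity one and no embedded structure, and that $\sU$ is reduced.
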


\begin{proof}
	Firstly we prove that $p$ is finite. Assume to the contrary that $p$ is not finite. Then one can find a point $x\in X$ such that $p^{-1}(x)$ is positive-dimensional. In particular, there exists an irreducible positive-dimensional analytic subspace $\sW_x$ of $\sW$ consisting of cycles passing through $x$. Note that $\dim p(q^{-1}(\sW_x))\geq n+1$ and then Lemma \ref{lem.Fibers-q} above implies
	\[
	\dim \Locus \sV(0\mapsto x) \geq \dim p(q^{-1}(\sW_x)) \geq n+1,
	\]
	which contradicts Lemma \ref{lem.Locus-Rational-Curves}. Hence $p$ is finite.
	
	Next we show that $p$ is an isomorphism. Since $X$ is smooth, by the analytic version of Zariski Main Theorem, it is enough to prove that $p$ is birational. Assume to the contrary that $p$ is not birational. Let $x\in X$ be a general point. Then there exist two points $w\not=w'\in \sW$ such that $\sU_w\cong \sU_{w'}\cong \bP^n$ and $x\in p(\sU_w)\cap p(\sU_{w'})$. As $\dim \Locus\sV(0\mapsto x)=n$ and $p(\sU_w)\cup p(\sU_{w'})\subset \Locus \sV(0\mapsto x)$, it follows that $\sV_x$ is not irreducible, which contradicts Corollary \ref{cor.irred-Vx}.
\end{proof}

\begin{proof}[Proof of Theorem \ref{thm.MainThm}]
	By Lemmas \ref{lem.Fibers-q} and \ref{lem.p-iso}, there exists an equi-dimensional fibration $q\colon X\rightarrow Y$ with irreducible fibers such that the general fiber $F$ is isomorphic to $\bP^n$ and $L|_F\cong \cO_{\bP^n}(1)$ (see Lemma \ref{lem.Degree-Locus} and Proposition \ref{prop.Pn-bundle}). Moreover, after taking the Stein factorization, we may also assume that $Y$ is normal. In particular, since $X$ is smooth and $L$ is $q$-big, the map $q$ is projective by \cite[Theorem 3.1]{ClaudonHoering2024}.
	
	Let $y\in Y$ be an arbitrary point and let $X_y$ be the fiber of $q$ over $y$ with $\nu\colon \widetilde{X}_y\rightarrow X_y$ its normalization. Then both $X_y$ and $\widetilde{X}_y$ are projective. Moreover, for any point $x\in X_y$, as $\dim \Locus \sV(0\mapsto x)=n$ by Lemma \ref{lem.Degree-Locus}, it follows from Lemma \ref{lem.Fibers-q} that $X_y$ is an irreducible component of $\Locus\sV(0\mapsto x)$. In particular, note that every rational curve passing through a general point $x\in X_y$ can be lifted to $\widetilde{X}_y$, so there exists a generically unsplit family $\sK$ of rational curves on $\widetilde{X}_y$ such that $\Locus \sK_x=\widetilde{X}_y$ for any general point $x\in \widetilde{X}_y$ and $\nu^*L\cdot \sK=1$. Then $\widetilde{X}_y\cong \bP^n$ and $\nu^*L\cong \cO_{\bP^n}(1)$ by \cite[Theorem 3.6]{Kebekus2002}. In particular, the contact line bundle $L$ is $q$-ample.

	Now we can apply \cite[Lemma 2.12]{Fujita1987} to conclude that $Y$ is smooth and consequently $(X,L)$ is isomorphic to a projective bundle $(\bP E,\sO_{\bP E}(1))$ by \cite[Corollary 5.4]{Fujita1975}, where $E\rightarrow Y$ is a vector bundle of rank $n+1$. Here we remark that Fujita's results are stated only for projective varieties, the arguments carry over verbatim to projective morphisms. It follows that $Y$ is indeed a K\"ahler manifold, and then the same argument as in the last paragraph of the proof of \cite[Theorem 2.12]{KebekusPeternellSommeseWisniewski2000} shows that $E$ is isomorphic to $T_Y$.
\end{proof}

	\bibliographystyle{alpha}
	\bibliography{CKM}
\end{document}